\makeatletter \@addtoreset{equation}{section} \makeatother
\renewcommand\thetable{\thesection.\@arabic\c@table}
\newtheorem{theorem}{Theorem}[section]
\newtheorem{proposition}[theorem]{Proposition}
\newtheorem{remark}[theorem]{Remark}
\newcommand{\R}{\mathbb{R}}
\newcommand{\N}{\mathbb{N}}
\newcommand{\K}{\mathbb{K}}
\newcommand{\eps}{\varepsilon}
\newcommand{\lraup}{\relbar\joinrel\rightharpoonup}
\newcommand{\bs}{\boldsymbol}
\newcommand{\cchi}{{\mbox{\raisebox{3pt}{$\chi$}}}}
\begin{document}
	\title{An evolutionary vector-valued variational inequality and Lagrange multiplier}
	\author[um]{Davide Azevedo}
	\ead{davidemsa@math.uminho.pt}
	\author[um]{Lisa Santos\corref{cor1}}
	\ead{lisa@math.uminho.pt}
	\cortext[cor1]{Corresponding author}
	\address[um]{Centro de Matem\'atica, Universidade do Minho, Campus de Gualtar, 4710-057 Braga, Portugal}


\begin{abstract} We prove existence and uniqueness of solution of an evolutionary vector-valued variational inequality defined in the convex set of vector valued functions $\bs v$ subject to the constraint $|\bs v|\le1$. We show that we can write the variational inequality as a system of equations on the unknowns $(\lambda,\bs u)$, where $\lambda$ is a (unique) Lagrange multiplier belonging to $L^p$ and $\bs u$ solves the variational inequality.
	
Given data $(\bs f_n,\bs u_{n0})$ converging to $(\bs f,\bs u_0)$ in $\bs L^\infty(Q_T)\times H^1_0(\Omega)$, we prove the convergence of the solutions $(\lambda_n,\bs u_n)$ of the Lagrange multiplier problem to the solution of the limit problem, when we  let $n\rightarrow \infty$.
\end{abstract}

\maketitle
\section{Introduction}

Variational inequalities can model different physical and biological phenomena. The obstacle problem is the best known one (see \cite{LewyStampacchia1969,Kinderlehrer1971,Rodrigues1987}), but there are also many works about variational inequalities with gradient constraint  or with other constraints on the derivatives, such as on the curl, the symmetric gradient, the bilaplacian (see \cite{Ting1966,LanchonDuvaut1967,Prigozhin1994,MirandaRodriguesSantos2012,MirandaRodriguesSantos2020,AzevedoRodriguesSantos2024}), as well as variational inequalities subject to different types of constraints.

Less known are vector variational inequalities, such as the $N$-membranes VI, in \cite{AzevedoRodriguesSantos2005, RodriguesSantosUrbano2009, SavinYu2019}, a multiphase problem, where the vector-valued solution belongs to an $N$-simplex, in \cite{RodriguesSantos2009}, a vector-valued Allen-Cahn VI, in \cite{BlankGarckeSarbuStyles2013}, and others. For completeness, we define below the first two (evolutionary) VIs referred here, in their most simplified version.

From now on, we will write VI instead of variational inequality. A vector function taking values in $\R^N$ will be denoted by a bold symbol and, if $E$ is a normed vector space, $E^N$ will be denoted by $\bs E$. If the norm in $E$ is $\|\,\cdot\,\|$, we consider, for $\bs v\in\bs E$, the norm $\|\bs v\|_{\bs E}=\|(v_1,\ldots,v_N)\|_{\bs E}=\sqrt{\|v_1\|^2+\cdots+\|v_N\|^2}$. We use $|A|$ to denote the Lebesgue measure of a measurable subset of $\R^N$.

Let $\Omega$ be an open bounded subset of $\R^N$ and, for $T>0$, let $Q_T=\Omega\times(0,T)$. The evolutionary $N$-membranes VI (\cite{RodriguesSantosUrbano2009}) consists on finding $\bs u$ such that
\begin{equation}\label{NmembVI}
	\begin{cases}
\bs u(t)\in\K_1\text{ for a.e. }t\in(0,T),\ \bs u(0)=\bs u_0,\\
\displaystyle\int_\Omega\partial_t\bs u(t)\cdot\big(\bs v-\bs u(t)\big)+\int_\Omega\nabla\bs u(t):\nabla\big(\bs v-\bs u(t)\big)\ge\int_\Omega\bs f(t)\cdot\big(\bs v-\bs u(t)\big),\qquad\forall\bs v\in\K,
	\end{cases}
\end{equation}
where, for $\bs u=(u_1,\ldots,u_N)$ and $\bs v=(v_1,\ldots,v_N)$, $\nabla\bs u:\nabla\bs v=\displaystyle\sum_{i=1}^N\nabla u_i\cdot\nabla v_i$ and
\begin{equation*}
	\K_1=\big\{\bs v=(v_1,\ldots,v_N)\in \bs H^1_0(\Omega):v_1\ge v_2\ge\cdots\ge v_N\big\}.
\end{equation*}

The evolutionary multiphase VI (\cite{RodriguesSantos2009}) consists on finding $\bs u$ satisfying  \eqref{NmembVI} with $\K_1$ replaced by
\begin{equation*}
	\K_2=\big\{\bs v=(v_1,\ldots,v_N)\in \bs H^1_0(\Omega):v_1\ge0,\ldots,v_N\ge0, v_1+\cdots+v_N\le 1\big\}.
\end{equation*}

We observe that, setting $K_1=\{(x_1,\ldots,x_N)\in\R^N:x_1\ge x_2\ge\cdots\ge x_N\}$, we have
$$\K_1=\big\{\bs v\in \bs H^1_0(\Omega):\bs v(x)\in K_1\text{ for a.e. }x\in\Omega\},$$
and, setting $K_2=\{(x_1,\ldots,x_N)\in\R^N:x_1\ge0,\ldots, x_N\ge 0, x_1+\cdots x_N\le 1\}$, then
$$\K_2=\big\{\bs v\in \bs H^1_0(\Omega):\bs v(x)\in K_2\text{ for a.e. }x\in\Omega\}.$$
The sets $K_1$ and $K_2$ are closed convex subsets of $\R^N$, which are intersection of half-spaces. As a consequence, the contact sets of the solution of the variational inequality \eqref{NmembVI} for each problem (i.e., with the convex set $\K_1$ or $\K_2$) are of ``obstacle type'' and the VI can be written as a system of equations, involving characteristic functions.

The system for the evolutionary $N$-membranes problem (\cite{RodriguesSantosUrbano2009}) is
	\begin{equation*}
		\partial_t u_i-\Delta u_i=f_i+\sum_{\mbox{\scriptsize{$1\le j<k\le N, j\le i\le
					k$}}}b^{j,k}_i[\bs f]\,\cchi_{j,k}\qquad\mbox{ a.e. in }\Omega_T,
	\end{equation*}
	where $\cchi_{j,k}$ denotes the characteristic function of each $I_{j,k}=\{(x,t)\in Q_t: u_j(x,t)=\cdots=u_k(x,t)\}$,
	\begin{eqnarray*}
		b_i^{j,k}[\bs f] & = & \left\{
		\begin{array}{ll}
			\langle \bs f\rangle_{j,k}-\langle \bs f\rangle_{j,k-1} & \mbox{ if
			}\ i=j\vspace{1mm}\\
			\mbox{} \langle \bs f\rangle_{j,k}-\langle \bs f\rangle_{j+1,k} & \mbox{ if }\ i=k\vspace{1mm}\\
			\frac{2}{(k-j)(k-j+1)}\left(\langle \bs f\rangle_{j+1,k-1}
			-\frac12( f_j+ f_k)\right) & \mbox{ if }\ j<i<k
		\end{array}\right.
	\end{eqnarray*}
	and
	$$\langle\bs f\rangle_{j,k}=\frac{f_j+\cdots+f_k}{k-j+1},\qquad 1\le j\le k\le N.$$
	We observe that, in this system, all possible contacts among the component functions of the solution $(u_1,\ldots,u_N)$ are considered. A detailed proof of the equivalence of the variational inequality and this system, in the stationary case, can be found in \cite{AzevedoRodriguesSantos2005}.
	
	The system in the multiphases problem is analogous, involving characteristic functions of all possible contacts of the solution with the different faces of the $N$-simplex. For details, see \cite{RodriguesSantos2009}.

More generally, given $\{K(x)\}_{x\in\Omega}$, a family of closed convex subsets of $\R^N$, we can define the set
$$\bs K=\big\{\bs v\in\bs H^1_0(\Omega): \bs v(x)\in K(x)\text{ for a.e. } x\in\Omega\big\}$$
and consider the VI \eqref{NmembVI} defined in this convex set. Obvious questions are the existence of solution of the VI and whether we can rewrite the VI as a system of equations. To deal with this type of problems, we need to describe the set $\bs K$ in an analytic way and this can be achieved by using the Minkowsky functional. However, the behaviours of $K(x)$ and $K(x')$ for $x$ close to $x'$ need to have some ``smoothness'' to ensure $\bs K$ contains a sufficient number of functions.

In this paper, we are considering, as an early approach to this type of problems, the convex set
\begin{equation}\label{k}
	\K=\left\{\bs v=(v_1,\ldots,v_N)\in H^1_0(\Omega):v_1^2+\cdots v_N^2 \le1\right\}.
	\end{equation}
We observe this set is defined  by a quadratic type inequality, different from the affine inequalities that define $\K_1$ and $\K_2$. We will see that the system which describes the VI in this last case is very different. In fact, the VI with the convex set \eqref{k} can be described by the system
$$\partial_t\bs u-\Delta\bs u+\lambda\bs u=\bs f\quad\text{ in }Q_T, \quad\bs u(0)=\bs u_0\quad\text{ in }\Omega,$$
where $\lambda\in L^p(Q_T)$,  for any $1\le p<\infty$, $\lambda\ge 1$ and $(\lambda-1)(|\bs u|-1)\equiv0$, where $|\bs u|^2=u_1^2+\cdots+u_N^2$.

This system is related to the equation that models the elastic-plastic torsion problem (see \cite{Brezis1972}), where the constraint $1$ is imposed on the gradient of a scalar function $u$. To be in accordance with our setting, we consider the evolutionary analogous to the elastic-plastic  problem, that can be written as follows
$$\partial_tu-\nabla\cdot(\lambda\nabla u)=f\quad\text{ in }Q_T,\quad u(0)=u_0\quad\text{ in }\Omega,$$
and $\lambda\in L^\infty(Q_T)$, $\lambda\ge 1$ and $(\lambda-1)(|\nabla u|-1)\equiv0$ (see \cite{Santos2002}).

\vspace{3mm}

In this paper, we will consider, given $\delta\in[0,\delta_0]$ with $\delta_0$ positive arbitrary, the evolutionary variational inequality: to find $\bs u_\delta$ such that
\begin{equation}\label{vi}
	\begin{cases}
		\bs u_\delta(t)\in\K\text{ for a.e. }t\in(0,T),\ \bs u_\delta(0)=\bs u_0\in\K,\\
		\displaystyle\int_\Omega\partial_t\bs u_\delta(t)\cdot(\bs v-\bs u_\delta(t)\big)+\int_\Omega\nabla\bs u_\delta(t):\nabla\big(\bs v-\bs u_\delta(t)\big)\\
	\hspace{16mm}+\displaystyle\delta\int_\Omega\bs u_\delta(t)\cdot\big(\bs v-\bs u_\delta(t)\big)\ge\int_\Omega\bs f(t)\cdot\big(v-\bs u_\delta(t)\big),\quad\forall\bs v\in\K,\text{ for a.e. }t\in(0,T).
	\end{cases}
	\end{equation}
We will also study the system:
	 to find $(\lambda_\delta,\bs u_\delta)$  such that
	\begin{equation}\label{lm}
		\begin{cases}
			\partial_t\bs u_\delta-\Delta\bs u_\delta+\lambda_\delta\bs u_\delta=\bs f\quad\text{ in }Q_T,\\
			\bs u_\delta= 0\quad\text{ on }\partial\Omega\times(0,T),\quad\bs u_\delta(0)=\bs u_0\quad\text{ in }\Omega,\\
			\lambda_\delta\ge \delta,\quad (\lambda_\delta-\delta)(|\bs u_\delta|-1)=0,\quad\text{ in }Q_T.
		\end{cases}
	\end{equation}
	
	The main results of this paper are
	\begin{theorem}\label{T1}
	Given $\bs f\in\bs L^\infty(Q_T)$ and $\bs u_0\in\K$, the variational inequality \eqref{vi} has a unique solution
$$\bs u_\delta \in L^2\big(0,T;\bs H^1_0\big(\Omega)\big)\cap H^1\big(0,T;\bs L^2(\Omega)\big) \cap \bs L^p(Q_T), \qquad 1\leq p \leq \infty.$$

\end{theorem}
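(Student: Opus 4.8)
The plan is to obtain existence by penalization, deduce uniqueness by a standard energy comparison, and observe that the $\bs L^p$ regularity comes for free. I first record the two easy points. Since every $\bs v\in\K$ satisfies $|\bs v(x)|\le1$ a.e., any solution obeys $|\bs u_\delta(x,t)|\le1$ a.e. in $Q_T$; hence $\bs u_\delta\in\bs L^\infty(Q_T)$ with $\|\bs u_\delta\|_{\bs L^\infty}\le1$, and therefore $\bs u_\delta\in\bs L^p(Q_T)$ for every $1\le p\le\infty$. For uniqueness, given two solutions $\bs u_\delta^1,\bs u_\delta^2$, I would test the inequality for $\bs u_\delta^1$ with $\bs v=\bs u_\delta^2(t)$ and conversely, add the two, and use $\bs u_\delta^1(0)=\bs u_\delta^2(0)=\bs u_0$. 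This yields
$$\tfrac12\tfrac{d}{dt}\|\bs u_\delta^1-\bs u_\delta^2\|_{\bs L^2(\Omega)}^2+\|\nabla(\bs u_\delta^1-\bs u_\delta^2)\|_{\bs L^2(\Omega)}^2+\delta\|\bs u_\delta^1-\bs u_\delta^2\|_{\bs L^2(\Omega)}^2\le0,$$
and integrating in $t$ forces $\bs u_\delta^1=\bs u_\delta^2$.

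For existence I would regularize the constraint. Writing $\Phi(\bs y)=\tfrac14\big((|\bs y|^2-1)^+\big)^2$, which is convex on $\R^N$ with $\nabla\Phi(\bs y)=(|\bs y|^2-1)^+\bs y$, I set the penalized operator $\beta_\eps=\tfrac1\eps\nabla\Phi$; it is monotone and vanishes on $\{|\bs y|\le1\}$. I then solve the penalized parabolic problem
$$\partial_t\bs u_\eps-\Delta\bs u_\eps+\delta\bs u_\eps+\beta_\eps(\bs u_\eps)=\bs f,\quad\bs u_\eps(0)=\bs u_0,\quad\bs u_\eps=\bs0\text{ on }\partial\Omega\times(0,T),$$
whose solvability follows from the theory of monotone coercive operators (e.g.\ a Galerkin scheme), since $-\Delta+\delta I+\beta_\eps$ is monotone and coercive.

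Next I would derive $\eps$-uniform a priori bounds. Testing the penalized equation with $\bs u_\eps$ gives, after integration, a bound on $\bs u_\eps$ in $L^2(0,T;\bs H^1_0(\Omega))$ (using $\bs f\in\bs L^2(Q_T)$). Testing with $\partial_t\bs u_\eps$ and noting $\int_\Omega\beta_\eps(\bs u_\eps)\cdot\partial_t\bs u_\eps=\tfrac{d}{dt}\tfrac1\eps\int_\Omega\Phi(\bs u_\eps)\ge0$, together with $\Phi(\bs u_0)\equiv0$ (because $\bs u_0\in\K$), yields uniform bounds on $\partial_t\bs u_\eps$ in $\bs L^2(Q_T)$, on $\bs u_\eps$ in $L^\infty(0,T;\bs H^1_0(\Omega))$, and on $\tfrac1\eps\int_\Omega\Phi(\bs u_\eps)$, the last forcing $(|\bs u_\eps|^2-1)^+\to0$ in $L^2(Q_T)$. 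By Aubin--Lions I extract $\bs u_\eps\to\bs u$ strongly in $\bs L^2(Q_T)$ (and a.e.), weakly in $L^2(0,T;\bs H^1_0(\Omega))$, with $\partial_t\bs u_\eps\rightharpoonup\partial_t\bs u$ in $\bs L^2(Q_T)$; the a.e.\ limit gives $|\bs u|\le1$, i.e.\ $\bs u(t)\in\K$. Finally, for $\bs v\in\K$ one has $\beta_\eps(\bs v)=0$, so monotonicity gives $\int_\Omega\beta_\eps(\bs u_\eps)\cdot(\bs v-\bs u_\eps)\le0$; discarding this term recasts the penalized equation as the inequality \eqref{vi} for $\bs u_\eps$, which I then pass to the limit.

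The \emph{main obstacle} is the passage to the limit in the two nonlinear terms $\int_\Omega|\nabla\bs u_\eps|^2$ and $\int_\Omega\partial_t\bs u_\eps\cdot\bs u_\eps=\tfrac12\tfrac{d}{dt}\|\bs u_\eps\|^2_{\bs L^2(\Omega)}$, for which only weak convergence is available. I would handle these by working with the time-integrated inequality against test functions $\bs v\in\K$, taking the $\limsup$ and using weak lower semicontinuity of the $\bs H^1_0$-seminorm and of the end-time $\bs L^2$-norm (both appearing with the favourable sign), and then recovering the a.e.-in-$t$ formulation. A secondary technical point is justifying the test by $\partial_t\bs u_\eps$ rigorously, which I would carry out at the Galerkin level before passing to the limit in $\eps$.
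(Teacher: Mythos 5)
Your proof is correct, but it takes a genuinely different route from the paper's, in two respects. First, the penalty: you use the classical quartic-potential penalty $\beta_\eps(\bs u)=\tfrac1\eps(|\bs u|^2-1)^+\bs u$ (with $\delta\bs u$ kept as a separate linear term), whereas the paper's approximating problem \eqref{ap} uses the exponential penalty $k_{\eps\delta}(|\bs u_{\eps\delta}|^2-1)\bs u_{\eps\delta}$ with $k_{\eps\delta}(s)=\delta+e^{s/\eps}-1$ for $s>0$. For Theorem \ref{T1} alone your choice is arguably cleaner: $\beta_\eps$ has polynomial growth, so solvability of the penalized problem by monotone coercive operators (in the framework $\bs H^1_0(\Omega)\cap\bs L^4(\Omega)$, or by Galerkin) is more readily justified than for an exponential nonlinearity, and your observation that the $\bs L^p$ membership is free once $|\bs u_\delta|\le 1$ a.e.\ is correct. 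Second, the limit passage: you keep the quadratic terms $\int|\nabla\bs u_\eps|^2$ and the end-time $\bs L^2$ norms and invoke weak lower semicontinuity with the favourable signs, while the paper first bounds $\nabla\bs u_{\eps\delta}:\nabla(\bs v-\bs u_{\eps\delta})\le\nabla\bs v:\nabla(\bs v-\bs u_{\eps\delta})$ (Minty's translation trick), passes to the limit in an expression that is affine in $\bs u_{\eps\delta}$, and then recovers \eqref{vi} by testing with $\bs u_\delta(t)+\theta(\bs v-\bs u_\delta(t))$ and letting $\theta\to0$. Both are standard and valid; note only that the time-derivative term is easier than you suggest, since $\partial_t\bs u_\eps\rightharpoonup\partial_t\bs u$ weakly and $\bs u_\eps\to\bs u$ strongly in $\bs L^2(Q_T)$ make $\int_s^t\int_\Omega\partial_t\bs u_\eps\cdot\bs u_\eps$ converge outright, whereas your end-time-norm argument needs strong convergence of $\bs u_\eps(s)$ at the lower limit, which holds only for a.e.\ $s$ (or $s=0$). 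Finally, be aware of what the paper's heavier exponential penalty buys: it is engineered not for Theorem \ref{T1} but for Theorem \ref{T2}, since it yields $\eps$-uniform $L^p(Q_T)$ bounds, for every $p<\infty$, on the approximate multiplier $\widehat k_{\eps\delta}$ (Propositions \ref{prop2.2} and \ref{prop23}), whose weak limit is the Lagrange multiplier $\lambda_\delta\in L^p(Q_T)$ of \eqref{lm}; your penalty $\tfrac1\eps(|\bs u_\eps|^2-1)^+$ admits only much weaker bounds, so your construction would not serve for the multiplier problem without additional work. Your uniqueness argument coincides with the paper's.
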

	\begin{theorem}\label{T2}
Given $\bs f\in\bs L^\infty(Q_T)$ and $\bs u_0\in\K$, problem \eqref{lm} has a unique solution 
	$$(\lambda_\delta,\bs u_\delta)\in L^p(Q_T)\times\left( L^2\big(0,T;\bs H^1_0\big(\Omega)\big)\cap H^1\big(0,T;\bs L^2(\Omega)\big) \cap \bs L^p(Q_T)\right), \qquad 1\leq p \leq \infty.$$
	Besides, $\bs u_\delta$ solves the variational inequality \eqref{vi}.
	\end{theorem}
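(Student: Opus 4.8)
The plan is to construct the pair $(\lambda_\delta,\bs u_\delta)$ by penalization and then to identify it, through the easy implication ``\eqref{lm}$\Rightarrow$\eqref{vi}'', with the solution furnished by Theorem~\ref{T1}. For $\eps>0$ I would consider the semilinear parabolic system
\begin{equation*}
\partial_t\bs u^\eps-\Delta\bs u^\eps+\lambda^\eps\bs u^\eps=\bs f\ \text{ in }Q_T,\qquad \bs u^\eps=\bs 0\ \text{ on }\partial\Omega\times(0,T),\qquad \bs u^\eps(0)=\bs u_0,
\end{equation*}
with the penalized multiplier $\lambda^\eps=\delta+\tfrac1\eps\big(|\bs u^\eps|^2-1\big)^+$. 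Since $\bs u\mapsto\big(|\bs u|^2-1\big)^+\bs u$ is the gradient of the convex function $\tfrac14\big((|\bs u|^2-1)^+\big)^2$, the penalty term is monotone, so existence and uniqueness of $\bs u^\eps$ in $L^2\big(0,T;\bs H^1_0(\Omega)\big)\cap H^1\big(0,T;\bs L^2(\Omega)\big)$ follow from standard monotone operator/Galerkin arguments. Testing with $\bs u^\eps$ and with $\partial_t\bs u^\eps$ gives uniform bounds for $\bs u^\eps$ in $L^2\big(0,T;\bs H^1_0(\Omega)\big)\cap L^\infty\big(0,T;\bs L^2(\Omega)\big)\cap H^1\big(0,T;\bs L^2(\Omega)\big)$, while the identity $\eps(\lambda^\eps-\delta)=\big(|\bs u^\eps|^2-1\big)^+$ shows that $\big(|\bs u^\eps|^2-1\big)^+\to0$, so any weak limit will satisfy $|\bs u_\delta|\le1$.

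The decisive step, and where I expect the main difficulty, is a bound for $\lambda^\eps$ in $L^p(Q_T)$ uniform in $\eps$. Taking the scalar product of the equation with $\bs u^\eps$ and writing $w^\eps=|\bs u^\eps|^2$ yields
\begin{equation*}
\lambda^\eps w^\eps=\bs u^\eps\cdot\bs f-\tfrac12\partial_t w^\eps+\tfrac12\Delta w^\eps-|\nabla\bs u^\eps|^2.
\end{equation*}
Multiplying by $(\lambda^\eps-\delta)^{p-1}$, which is a nondecreasing function of $w^\eps$ vanishing on $\partial\Omega$, the term $\int_{Q_T}\tfrac12\Delta w^\eps\,(\lambda^\eps-\delta)^{p-1}$ is nonpositive after integration by parts, the gradient term is nonpositive, and on $\{\lambda^\eps>\delta\}$ one has $w^\eps\ge1$, so the left-hand side controls $\int_{Q_T}(\lambda^\eps-\delta)^p$ from below. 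The remaining terms are handled by $\|\bs f\|_{\bs L^\infty(Q_T)}$, a Young inequality, and an integration by parts in time of $\int_{Q_T}\partial_t w^\eps\,(\lambda^\eps-\delta)^{p-1}$ using the initial datum; the genuine obstacle is to control $|\bs u^\eps|$ on the penalty region $\{\lambda^\eps>\delta\}$ so as to close the inequality, leading to a bound $\|\lambda^\eps-\delta\|_{L^p(Q_T)}\le C_p\big(1+\|\bs f\|_{\bs L^\infty(Q_T)}\big)$ whose constant is allowed to depend on $p$. This dependence is precisely what accounts for the conclusion $\lambda_\delta\in L^p(Q_T)$ for every finite $p$.

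With these estimates I would pass to the limit $\eps\to0$: the Aubin--Lions lemma gives $\bs u^\eps\to\bs u_\delta$ strongly in $\bs L^2(Q_T)$, while $\lambda^\eps\rightharpoonup\lambda_\delta$ in $L^p(Q_T)$ and $\lambda^\eps\bs u^\eps\rightharpoonup\lambda_\delta\bs u_\delta$ by weak--strong convergence, so $(\lambda_\delta,\bs u_\delta)$ satisfies the differential equation and boundary/initial conditions in \eqref{lm}. The sign $\lambda_\delta\ge\delta$ passes to the weak limit, and since $\lambda_\delta\bs u_\delta\in\bs L^p(Q_T)$, the equation gives $\Delta\bs u_\delta\in\bs L^2(Q_T)$ and hence the extra regularity $\bs u_\delta(t)\in\bs H^2(\Omega)$ for a.e.\ $t$. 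The complementarity relation follows from $\int_{Q_T}(\lambda^\eps-\delta)(w^\eps-1)=\eps\int_{Q_T}(\lambda^\eps-\delta)^2\to0$ combined with the pointwise sign $(\lambda_\delta-\delta)(|\bs u_\delta|^2-1)\le0$.

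It remains to prove equivalence and uniqueness. That any solution of \eqref{lm} solves \eqref{vi} is direct: for $\bs v\in\K$, using the equation and the integration by parts justified above,
\begin{equation*}
\int_\Omega\partial_t\bs u_\delta\cdot(\bs v-\bs u_\delta)+\int_\Omega\nabla\bs u_\delta:\nabla(\bs v-\bs u_\delta)+\delta\int_\Omega\bs u_\delta\cdot(\bs v-\bs u_\delta)-\int_\Omega\bs f\cdot(\bs v-\bs u_\delta)=\int_\Omega(\delta-\lambda_\delta)\big(\bs u_\delta\cdot\bs v-|\bs u_\delta|^2\big),
\end{equation*}
and the right-hand side is nonnegative because $\delta-\lambda_\delta\le0$ while, on $\{\lambda_\delta>\delta\}$, complementarity forces $|\bs u_\delta|=1$ and hence $\bs u_\delta\cdot\bs v-|\bs u_\delta|^2\le|\bs v|-1\le0$. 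By the uniqueness in Theorem~\ref{T1}, the component $\bs u_\delta$ of any solution of \eqref{lm} coincides with the unique solution of \eqref{vi}; and once $\bs u_\delta$ is fixed, the multiplier is determined almost everywhere by $\lambda_\delta=\delta$ on $\{|\bs u_\delta|<1\}$ and, on $\{|\bs u_\delta|=1\}$, by $\lambda_\delta=\bs u_\delta\cdot\big(\bs f-\partial_t\bs u_\delta+\Delta\bs u_\delta\big)$, which gives uniqueness of $\lambda_\delta$ and completes the proof.
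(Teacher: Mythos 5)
Your overall architecture coincides with the paper's: penalize the constraint, obtain a uniform $L^p(Q_T)$ bound on the multiplier by testing the equation with $(\lambda^\eps-\delta)^{p-1}\bs u^\eps$ (integration by parts in space and time killing the signed terms), pass to the limit by weak--strong convergence of $\lambda^\eps\bs u^\eps$, recover the complementarity condition, and then prove \eqref{lm}$\Rightarrow$\eqref{vi} and the uniqueness of $\lambda_\delta$ via $\xi\bs u_\delta=0$ on $\{|\bs u_\delta|=1\}$ --- those last four steps are correct and essentially identical to the paper's. The genuine gap is exactly the step you yourself label ``the genuine obstacle'' and then skip: the closure of the $L^p$ estimate for $\lambda^\eps$. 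After Young's inequality, the term $\|\bs f\|_{\bs L^\infty(Q_T)}\int_{Q_T}|\bs u^\eps|(\lambda^\eps-\delta)^{p-1}$ requires a bound on $\|\bs u^\eps\|_{\bs L^p(Q_T)}$ \emph{uniform in $\eps$}, and with your power penalty $\lambda^\eps-\delta=\tfrac1\eps\big(|\bs u^\eps|^2-1\big)^+$ no such bound is available a priori: the $L^1$ bound on $\lambda^\eps-\delta$ only yields $\big\|\big(|\bs u^\eps|^2-1\big)^+\big\|_{L^1(Q_T)}\le C\eps$, i.e.\ $L^2$-type information on $\bs u^\eps$, and bounding $\int(\lambda^\eps-\delta)^p$ by $\int\big(|\bs u^\eps|^2-1\big)^p$ reintroduces the quantity you are trying to estimate, so the argument is circular as stated. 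This is precisely why the paper uses the exponential penalty $k_{\eps\delta}(s)=\delta+e^{s/\eps}-1$: since $e^{s/\eps}-1\ge s^k/(k!\,\eps^k)$ for every $k$, the mere $L^1$ bound on $\widehat k_{\eps\delta}$ already gives the uniform bounds $\|\bs u_{\eps\delta}\|_{\bs L^p(Q_T)}\le C_p$ of Proposition \ref{prop2.2}, and only then does the Young absorption of Proposition \ref{prop23} close. As written, your proof is incomplete at its central estimate.

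The gap is fillable within your scheme, and you should write it out, because the power penalty offers an exact substitute for the missing $L^p$ bound: on the penalty set one has the identity $|\bs u^\eps|^2=1+\eps(\lambda^\eps-\delta)$, hence $|\bs u^\eps|\le1+\tfrac\eps2(\lambda^\eps-\delta)$, so that $\|\bs f\|_{\bs L^\infty(Q_T)}\int_{Q_T}|\bs u^\eps|(\lambda^\eps-\delta)^{p-1}\le\|\bs f\|_{\bs L^\infty(Q_T)}\int_{Q_T}(\lambda^\eps-\delta)^{p-1}+\tfrac\eps2\|\bs f\|_{\bs L^\infty(Q_T)}\int_{Q_T}(\lambda^\eps-\delta)^{p}$; for $\eps\le1/\|\bs f\|_{\bs L^\infty(Q_T)}$ the last term is absorbed into the left-hand side and the $(p-1)$-power term is handled by Young again, giving the desired uniform bound (alternatively, bootstrap: the $L^p$ bound on $\lambda^\eps-\delta$ gives $\big\|\big(|\bs u^\eps|^2-1\big)^+\big\|_{L^p}\le C\eps$, hence a uniform $\bs L^{2p}$ bound on $\bs u^\eps$, hence the $L^{2p}$ bound on the multiplier, and so on). Two smaller points: your claim $\bs u_\delta(t)\in\bs H^2(\Omega)$ requires boundary regularity the paper never assumes, and it is not needed --- for the uniqueness formula it suffices that $\Delta\bs u_\delta=\partial_t\bs u_\delta+\lambda_\delta\bs u_\delta-\bs f$ belongs to $\bs L^2(Q_T)$ as a function; and in passing to the limit in $\int_{Q_T}(\lambda^\eps-\delta)\big(|\bs u^\eps|^2-1\big)$ you need $|\bs u^\eps|^2\to|\bs u_\delta|^2$ strongly in $L^{p'}(Q_T)$, which follows from the strong $\bs L^2$ convergence together with the uniform $\bs L^q$ bounds by interpolation, but deserves a line.
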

	
In Section 2, we define a family of approximating problems, for which we prove existence of solution. We obtain several {\em a priori} estimates that will be important in Section 3.

In Section 3 we prove Theorems \ref{T1} and \ref{T2}.

Section 4 is dedicated to studying a continuous dependence result, both for the variational inequality \eqref{vi} and for the Lagrange problem \eqref{lm}.

\section{Approximating problem}\label{sec2}

Recall that $\delta\in[0,\delta_0]$ and fix $0<\eps<1$. We define the real function $k_{\eps\delta}$ as $k_{\eps\delta}(s)=\delta$ if $s\le0$, $k_{\eps\delta}(s)=\delta+e^\frac{s}\eps-1$ if $s>0$ and a family of approximating problems, setting $\bs u_{\eps\delta}=(u_{1\eps\delta},\ldots, u_{N\eps\delta})$ and $\bs u_0=(u_{10},\ldots,u_{N0})$,
\begin{equation}
\label{ap}
\begin{cases}
	\partial_t\bs u_{\eps\delta}-\Delta \bs u_{\eps\delta}+k_{\eps\delta}(|\bs u_{\eps\delta}|^2-1)\bs u_{\eps\delta}=\bs f\qquad\text{ in }Q_T,\\
\bs u_{\eps\delta}= 0\qquad\text{ on }\partial\Omega\times(0,T),\\
\bs u_{\eps\delta}(0)=\bs u_0\qquad\text{ in }\Omega.
\end{cases}
\end{equation}

Observe that the operator $\Phi_{\eps\delta}(\bs u)=k_{\eps\delta}(|\bs u|^2-1)\bs u$ is monotone. In fact,
\begin{align*}
(\Phi_{\eps\delta}(\bs u)-\Phi_{\eps\delta}(\bs v))\cdot(\bs u-\bs v)&=(k_{\eps\delta}(|\bs u|^2-1)\bs u-k_{\eps\delta}(|\bs v|^2-1)\bs v)\cdot(\bs u-\bs v)\\
&\ge( k_{\eps\delta}(|\bs u|^2-1)|\bs u|-k_{\eps\delta}(|\bs v|^2-1)|\bs v|) (|\bs u|-|\bs v|)\ge0,
\end{align*}
since $ k_{\eps\delta}(s^2-1)s$ is an increasing function for $s\geq 0$.

From now on, whenever there is no confusion, we use $\widehat k_{\eps\delta}$ instead of $k_{\eps\delta}(| \bs u_{\eps\delta}|^2-1)$.

The following result is an immediate consequence of known results for monotone operators (see \cite{Lions1969}).

\begin{proposition}
	If $\bs f\in\bs L^2(Q_T)$ and $\bs u_0\in\K$, problem \eqref{ap} has a unique solution
	$$\bs u_{\eps\delta}\in L^2\big(0,T;\bs H^1_0(\Omega)\big).$$

\end{proposition}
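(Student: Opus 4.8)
The plan is to recast \eqref{ap} as an abstract Cauchy problem in the Gelfand triple $\bs H^1_0(\Omega)\hookrightarrow\bs L^2(\Omega)\hookrightarrow\bs H^{-1}(\Omega)$ and to invoke the existence and uniqueness theory for evolution equations governed by monotone operators. Concretely, I would define the operator $\bs A\colon\bs H^1_0(\Omega)\to\bs H^{-1}(\Omega)$ by
$$\langle\bs A\bs u,\bs v\rangle=\int_\Omega\nabla\bs u:\nabla\bs v+\int_\Omega\Phi_{\eps\delta}(\bs u)\cdot\bs v,$$
so that \eqref{ap} reads $\partial_t\bs u_{\eps\delta}+\bs A\bs u_{\eps\delta}=\bs f$ with $\bs u_{\eps\delta}(0)=\bs u_0$, and then verify the structural hypotheses of the abstract theorem.

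First I would check monotonicity: the principal part $-\Delta$ is linear and monotone, and the monotonicity of $\Phi_{\eps\delta}(\bs u)=k_{\eps\delta}(|\bs u|^2-1)\bs u$ is precisely the computation already displayed above, so $\bs A$ is monotone. For coercivity, since $k_{\eps\delta}\ge\delta\ge0$ the lower-order term is nonnegative, $\int_\Omega\Phi_{\eps\delta}(\bs u)\cdot\bs u=\int_\Omega k_{\eps\delta}(|\bs u|^2-1)|\bs u|^2\ge0$, whence $\langle\bs A\bs u,\bs u\rangle\ge\|\nabla\bs u\|_{\bs L^2(\Omega)}^2\ge c\,\|\bs u\|_{\bs H^1_0(\Omega)}^2$ by Poincar\'e's inequality. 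Hemicontinuity of $\bs A$ follows from the continuity of $s\mapsto k_{\eps\delta}(s)$ together with dominated convergence.

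The main obstacle is the \emph{growth} of $\bs A$: because $k_{\eps\delta}$ grows exponentially, $\Phi_{\eps\delta}(\bs u)$ need not lie in $\bs L^2(\Omega)$ when $\bs u\in\bs H^1_0(\Omega)$ for $N\ge2$, so $\bs A$ is not a bounded map into $\bs H^{-1}(\Omega)$ and the usual growth condition fails. I would circumvent this in either of two equivalent ways. The structural route exploits the gradient form of the nonlinearity: one has $\Phi_{\eps\delta}=\nabla_{\bs u}\Psi_{\eps\delta}$ with $\Psi_{\eps\delta}(\bs u)=G_{\eps\delta}(|\bs u|^2)$ and $G_{\eps\delta}'(r)=\tfrac12 k_{\eps\delta}(r-1)$; since $k_{\eps\delta}$ is nonnegative and increasing, $G_{\eps\delta}$ is convex and increasing, so $\Psi_{\eps\delta}$ is convex and \eqref{ap} is the gradient flow of the proper, convex, lower semicontinuous functional $J_{\eps\delta}(\bs u)=\tfrac12\int_\Omega|\nabla\bs u|^2+\int_\Omega\Psi_{\eps\delta}(\bs u)$. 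Br\'ezis' theory for evolution equations governed by subdifferentials then yields a unique solution, with the nonlinear term interpreted through $\partial J_{\eps\delta}$ and no pointwise integrability of $\Phi_{\eps\delta}(\bs u_{\eps\delta})$ required a priori. The alternative route is to truncate, replacing $k_{\eps\delta}$ by $k_{\eps\delta}^M=\min\{k_{\eps\delta},M\}$, which satisfies a bounded growth condition so that the monotone operator theorem of \cite{Lions1969} applies verbatim; one then derives $M$-independent a priori estimates by testing with $\bs u_{\eps\delta}^M$ and passes to the limit $M\to\infty$, identifying the nonlinear limit by Minty's monotonicity argument.

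Finally, uniqueness is immediate from monotonicity. If $\bs u_1,\bs u_2$ are two solutions with the same data, subtracting the equations and testing with $\bs u_1-\bs u_2$ gives $\tfrac12\tfrac{d}{dt}\|\bs u_1-\bs u_2\|_{\bs L^2(\Omega)}^2+\langle\bs A\bs u_1-\bs A\bs u_2,\bs u_1-\bs u_2\rangle=0$, and since $\langle\bs A\bs u_1-\bs A\bs u_2,\bs u_1-\bs u_2\rangle\ge0$ while $\bs u_1(0)=\bs u_2(0)$, Gronwall's inequality forces $\bs u_1=\bs u_2$.
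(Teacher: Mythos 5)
Your proposal is correct and, at its core, takes the same route as the paper: the paper establishes monotonicity of $\Phi_{\eps\delta}$ (the computation you reproduce) and then disposes of the proposition in one line as ``an immediate consequence of known results for monotone operators (see \cite{Lions1969})'', giving no further argument. What you add, and what the paper silently glosses over, is the observation that this citation is not quite immediate: since $k_{\eps\delta}$ grows exponentially, the operator $\bs u\mapsto k_{\eps\delta}(|\bs u|^2-1)\bs u$ is not a bounded map from $\bs H^1_0(\Omega)$ into $\bs H^{-1}(\Omega)$ (indeed for $N\ge3$ it need not even be everywhere defined, as $e^{|\bs u|^2/\eps}$ can fail to be integrable for $\bs u\in\bs H^1_0(\Omega)$), so the standard boundedness/growth hypotheses in Lions' existence theorem are violated. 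Your two patches are both sound: the gradient-flow reading is the cleaner one, since the monotonicity of $s\mapsto k_{\eps\delta}(s^2-1)s$ on $s\ge0$ is exactly the convexity of the potential $\Psi_{\eps\delta}$, and Br\'ezis' subdifferential theory then gives existence and uniqueness with no growth restriction; the truncation-plus-Minty argument reaches the same conclusion with more work but stays within the framework of \cite{Lions1969}. The uniqueness argument via testing the difference of two solutions and monotonicity is exactly what the cited theory provides. In short, your proof is a careful, complete version of the argument the paper merely points at, rather than a genuinely different method.
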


The rest of this section consists on obtaining {\em a priori} estimates independent of $\eps$, which will allow us to pass to the limit and use it to prove the main theorems of this paper.

\begin{proposition} \label{prop2.2} Suppose that $\bs f\in\bs L^\infty(Q_T)$, $\bs u_0\in\K$ and let $\bs u_{\eps\delta}$ be the solution of problem \eqref{ap}. Then there exists a positive constant $C$, depending on $\|\bs f\|_{\bs L^\infty(Q_T)}$ and $\|\bs u_0\|_{\bs L^2(\Omega)}$, but independent of $\eps$ and $\delta$, such that
	\begin{equation}\label{estes}
\|\bs u_{\eps\delta}\|_{L^\infty(0,T;\bs L^2(\Omega))}\le C,\qquad \|\nabla\bs u_{\eps\delta}\|_{\bs L^2(Q_T)}\le C,\qquad \|\widehat k_{\eps\delta}\|_{L^1(Q_T)}\le C.
	\end{equation}
	Besides, for any $1\le p<\infty$, there exists a constant  $C_p>0$, depending on $\|\bs f\|_{\bs L^\infty(Q_T)}$ and $\|\bs u_0\|_{\bs L^2(\Omega)}$, but independent of $\eps$ and $\delta$, such that
	\begin{equation}\label{estup}
\|\bs u_{\eps\delta}\|_{\bs L^p(Q_T)}\le C_p.
	\end{equation}
\end{proposition}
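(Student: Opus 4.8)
The plan is to derive all four bounds by testing equation \eqref{ap} against well-chosen functions and invoking Gronwall's lemma, using repeatedly that the reaction coefficient obeys $\widehat k_{\eps\delta}\ge\delta\ge0$. First I would obtain the first two estimates from the basic energy identity. Testing \eqref{ap} with $\bs u_{\eps\delta}(t)$, integrating over $\Omega$ and using $\bs u_{\eps\delta}=0$ on $\partial\Omega$ gives
$$\tfrac12\tfrac{d}{dt}\|\bs u_{\eps\delta}(t)\|_{\bs L^2(\Omega)}^2+\|\nabla\bs u_{\eps\delta}(t)\|_{\bs L^2(\Omega)}^2+\int_\Omega\widehat k_{\eps\delta}|\bs u_{\eps\delta}(t)|^2=\int_\Omega\bs f(t)\cdot\bs u_{\eps\delta}(t).$$
The reaction term on the left is nonnegative because $\widehat k_{\eps\delta}\ge0$, and the right-hand side is dominated, by Young's inequality, by $\tfrac12\|\bs f(t)\|_{\bs L^2(\Omega)}^2+\tfrac12\|\bs u_{\eps\delta}(t)\|_{\bs L^2(\Omega)}^2$. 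Since $\bs u_0\in\K$ forces $|\bs u_0|\le1$ a.e., so $\|\bs u_0\|_{\bs L^2(\Omega)}\le|\Omega|^{1/2}$, and since $\bs f\in\bs L^\infty(Q_T)\subset\bs L^2(Q_T)$, Gronwall's lemma yields the $L^\infty(0,T;\bs L^2(\Omega))$ bound; integrating the inequality over $(0,T)$ then produces the $\bs L^2(Q_T)$ bound on $\nabla\bs u_{\eps\delta}$. The constants depend only on $\|\bs f\|_{\bs L^\infty(Q_T)}$, $\|\bs u_0\|_{\bs L^2(\Omega)}$, $|\Omega|$ and $T$, hence are independent of $\eps$ and $\delta$; the same identity also gives $\int_{Q_T}\widehat k_{\eps\delta}|\bs u_{\eps\delta}|^2\le C$.

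For the $L^1$ bound on $\widehat k_{\eps\delta}$ I would split $Q_T$ according to the size of $|\bs u_{\eps\delta}|$. On $\{|\bs u_{\eps\delta}|\le1\}$ one has $|\bs u_{\eps\delta}|^2-1\le0$, so $\widehat k_{\eps\delta}=\delta\le\delta_0$ and its integral over this set is at most $\delta_0|Q_T|$. On $\{|\bs u_{\eps\delta}|>1\}$ one has $|\bs u_{\eps\delta}|^2>1$, hence $\widehat k_{\eps\delta}\le\widehat k_{\eps\delta}|\bs u_{\eps\delta}|^2$, whose integral is controlled by $\int_{Q_T}\widehat k_{\eps\delta}|\bs u_{\eps\delta}|^2\le C$ from the previous step. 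Adding the two contributions gives the third bound in \eqref{estes}, again with a constant independent of $\eps$ and $\delta$ (it depends on $\delta_0$, which is fixed).

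The last and most delicate estimate \eqref{estup} I would obtain by testing \eqref{ap} with $|\bs u_{\eps\delta}|^{p-2}\bs u_{\eps\delta}$. The time term produces $\tfrac1p\tfrac{d}{dt}\|\bs u_{\eps\delta}(t)\|_{\bs L^p(\Omega)}^p$, the reaction term $\int_\Omega\widehat k_{\eps\delta}|\bs u_{\eps\delta}|^p\ge0$ is discarded, and the diffusion term, after integration by parts, equals $\int_\Omega\big(|\bs u_{\eps\delta}|^{p-2}|\nabla\bs u_{\eps\delta}|^2+(p-2)|\bs u_{\eps\delta}|^{p-4}\big|\sum_i u_{i\eps\delta}\nabla u_{i\eps\delta}\big|^2\big)$; both integrand terms are nonnegative when $p\ge2$, while for $1\le p<2$ the elementary inequality $\big|\sum_i u_{i\eps\delta}\nabla u_{i\eps\delta}\big|^2\le|\bs u_{\eps\delta}|^2|\nabla\bs u_{\eps\delta}|^2$ shows the integrand is bounded below by $(p-1)|\bs u_{\eps\delta}|^{p-2}|\nabla\bs u_{\eps\delta}|^2\ge0$, so in either case the diffusion term is nonnegative and may be dropped. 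Bounding the right-hand side by Hölder's inequality as $\|\bs f\|_{\bs L^\infty(Q_T)}|\Omega|^{1/p}\|\bs u_{\eps\delta}(t)\|_{\bs L^p(\Omega)}^{p-1}$ and setting $z(t)=\|\bs u_{\eps\delta}(t)\|_{\bs L^p(\Omega)}$ reduces the whole inequality to $z'(t)\le\|\bs f\|_{\bs L^\infty(Q_T)}|\Omega|^{1/p}$, whence $z(t)\le|\Omega|^{1/p}\big(1+\|\bs f\|_{\bs L^\infty(Q_T)}T\big)$ using $\|\bs u_0\|_{\bs L^p(\Omega)}\le|\Omega|^{1/p}$; integrating the $p$-th power over $(0,T)$ gives \eqref{estup} with $C_p$ independent of $\eps$ and $\delta$.

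The main obstacle is the rigorous use of the test function $|\bs u_{\eps\delta}|^{p-2}\bs u_{\eps\delta}$, which is singular where $\bs u_{\eps\delta}$ vanishes when $p<2$ and need not a priori belong to $\bs H^1_0(\Omega)$. I would handle this by first testing with the regularized function $(\sigma+|\bs u_{\eps\delta}|^2)^{(p-2)/2}\bs u_{\eps\delta}$, carrying out the same manipulations — the Cauchy--Schwarz lower bound on the diffusion term survives uniformly in $\sigma$ — and then letting $\sigma\to0^+$ by monotone/dominated convergence; alternatively a truncation replacing $|\bs u_{\eps\delta}|^{p-2}$ by $\min(|\bs u_{\eps\delta}|,M)^{p-2}$ followed by $M\to\infty$ achieves the same conclusion.
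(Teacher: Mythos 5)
Your first three estimates follow essentially the paper's own path: the paper also tests \eqref{ap} with $\bs u_{\eps\delta}$ (absorbing the right-hand side with the Poincar\'e inequality rather than Gronwall's lemma, an immaterial difference), and it obtains the $L^1$ bound on $\widehat k_{\eps\delta}$ by the same mechanism as your splitting of $Q_T$ into $\{|\bs u_{\eps\delta}|\le 1\}$, where $\widehat k_{\eps\delta}=\delta\le\delta_0$, and $\{|\bs u_{\eps\delta}|>1\}$, where $\widehat k_{\eps\delta}\le\widehat k_{\eps\delta}|\bs u_{\eps\delta}|^2$. The genuine divergence is in the proof of \eqref{estup}. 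The paper never uses a nonlinear test function: it exploits the exponential structure of the penalty, namely $k_{\eps\delta}(s)\ge s^k/(k!\,\eps^k)$ for $s\ge0$ and every $k\in\N$, so that the already-established bound $\|\widehat k_{\eps\delta}\|_{L^1(Q_T)}\le C$ yields $\int_{Q_T}\big(\,(|\bs u_{\eps\delta}|^2-1)^+\big)^k\le Ck!\,\eps^k\le Ck!$, hence $\int_{Q_T}|\bs u_{\eps\delta}|^{2k}\le 2^{k-1}(2|Q_T|+Ck!)$, and \eqref{estup} follows from the inclusion $L^{2k}(Q_T)\subseteq L^p(Q_T)$ for $2k\ge p$. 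Your route instead ignores the structure of $k_{\eps\delta}$ entirely (only $\widehat k_{\eps\delta}\ge0$ is used) and runs an $L^p$-energy argument with the test function $|\bs u_{\eps\delta}|^{p-2}\bs u_{\eps\delta}$; your sign computation for the diffusion term, including the Cauchy--Schwarz lower bound $(p-1)|\bs u_{\eps\delta}|^{p-2}|\nabla\bs u_{\eps\delta}|^2$ when $1\le p<2$, is correct, and the resulting ODE bound is fine. What your approach buys: it is more general (any nonnegative penalty would do) and it gives far better constants, of order $|\Omega|^{1/p}(1+\|\bs f\|_{\bs L^\infty(Q_T)}T)$, instead of the factorially growing ones of the paper. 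What it costs: admissibility of the test function is a real issue for $p>2$ (for $p\le2$ the bound \eqref{estup} already follows from the first estimate in \eqref{estes} and H\"older's inequality on the bounded set $Q_T$), since $|\bs u_{\eps\delta}|^{p-2}\bs u_{\eps\delta}\in\bs L^2(Q_T)$ would require $\bs u_{\eps\delta}\in\bs L^{2(p-1)}(Q_T)$, which is exactly the kind of information being proved; your truncation $\min(|\bs u_{\eps\delta}|,M)^{p-2}\bs u_{\eps\delta}$ does repair this, but closing the Gronwall inequality uniformly in $M$ requires comparing $\min(s,M)^{p-2}s$ with the truncated primitive $\Phi_M(s)=\int_0^s\min(\tau,M)^{p-2}\tau\,d\tau$ before letting $M\to\infty$ by monotone convergence, which takes noticeably more care than your closing remark suggests. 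The paper's argument avoids all of this and, as a by-product, quantifies the decay of the constraint violation, $\|(|\bs u_{\eps\delta}|^2-1)^+\|_{L^k(Q_T)}^k\le Ck!\,\eps^k$, the same mechanism (via $e^{s/\eps}-1\ge s/\eps$) that reappears in the proof of Theorem \ref{T1} to show that the limit satisfies $|\bs u_\delta|\le1$.
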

\begin{proof} 
Evaluating the inner product of $\bs u_{\eps\delta}$ with the first equation of \eqref{ap} and integrating over $Q_t=\Omega\times(0,t)$, we get
	\begin{align*}
\tfrac12\int_{Q_t}\partial_t\big(|\bs u_{\eps\delta}|^2\big)+\int_{Q_t}|\nabla \bs u_{\eps\delta}|^2+\int_{Q_t}\widehat k_{\eps\delta}|\bs u_{\eps\delta}|^2&=\int_{Q_t}\bs f\cdot\bs u_{\eps\delta}\\
\nonumber&\le \tfrac12\int_{Q_t}|\nabla \bs u_{\eps\delta}|^2+\tfrac{C^2}2\int_{Q_t}|\bs f|^2,
	\end{align*}
	where $C$ is the Poincar\'e constant,
	obtaining
	\begin{equation*}
		\|\bs u_{\eps\delta}\|^2_{L^\infty(0,T;\bs L^2(Q_T))}+\|\nabla\bs u_{\eps\delta}\|^2_{\bs L^2(Q_T)}+2\|\widehat k_{\eps\delta}|\bs u_{\eps\delta}|^2\|_{L^1(Q_T)}\le  C^2\|\bs f\|^2_{\bs L^2(Q_T)}+\|\bs u_0\|^2_{\bs L^2(\Omega)}.
		\end{equation*}
	
From the previous inequality, we have
	$$\int_{Q_T}\widehat k_{\eps\delta}\big(|\bs u_{\eps\delta}|^2-1\big)+\int_{Q_T}\widehat k_{\eps\delta}\le  C^2\|\bs f\|^2_{\bs L^2(Q_T)}+\|\bs u_0\|^2_{\bs L^2(\Omega)}$$
	and, as
	\begin{align}\label{est2}
	\int_{Q_T}\widehat k_{\eps\delta}\big(|\bs u_{\eps\delta}|^2-1\big)&=\int_{\{|\bs u_{\eps\delta}|>1\}}\widehat k_{\eps\delta}\big(|\bs u_{\eps\delta}|^2-1\big)+\int_{\{|\bs u_{\eps\delta}|\le 1\}}\widehat k_{\eps\delta}\big(|\bs u_{\eps\delta}|^2-1\big)\\
\nonumber	&\ge \delta\int_{\{|\bs u_{\eps\delta}|\le 1\}}\big(|\bs u_{\eps\delta}|^2-1\big),\\
\nonumber	&\ge -\delta|Q_T|,
	\end{align}
	then
	$$\int_{Q_T}\widehat k_{\eps\delta}\le  \tfrac{C^2}2\|\bs f\|^2_{\bs L^2(Q_T)}+\tfrac12\|\bs u_0\|^2_{\bs L^2(\Omega)}+\tfrac{\delta_0}2|Q_T|.$$
	
	Recall that, for $s\ge0$, $k_{\eps\delta}(s)=\delta+e^\frac{s}\eps-1\ge \frac{s^k}{k!\eps^k}$, for any $k\in\N$, and so
	$$\int_{\{|\bs u_{\eps\delta}|>1\}}\tfrac{(|\bs u_{\eps\delta}|^2-1)^k}{k!\eps^k}\le \int_{\{|\bs u_{\eps\delta}|>1\}}\widehat k_{\eps\delta}\le\|\widehat k_{\eps\delta}\|_{L^1(Q_T)}\le C,$$
	concluding that
\begin{align*}
	\int_{Q_T}\big||\bs u_{\eps\delta}|^2-1\big|^k&= \int_{\{|\bs u_{\eps\delta}|\le1\}}\big||\bs u_{\eps\delta}|^2-1\big|^k+\int_{\{|\bs u_{\eps\delta}|>1\}}(|\bs u_{\eps\delta}|^2-1)^k\\
	&\le \int_{\{|\bs u_{\eps\delta}|\le1\}}1+Ck!\eps^k\le |Q_T|+Ck!\eps^k\le |Q_T|+Ck!.
\end{align*}
But then
\begin{equation*}
	\int_{Q_T}|\bs u_{\eps\delta}|^{2k}=\int_{Q_T}|(\bs u_{\eps\delta}|^2-1)+1|^k=2^{k-1}\Big(\int_{Q_T}\big||\bs u_{\eps\delta}\big|^2-1|^k+\int_{Q_T}1\Big)\le 2^{k-1}\big(2|Q_T|+Ck!).
\end{equation*}
Using the continuous inclusion $\bs L^{2k}(Q_T)\subseteq \bs L^p(Q_T)$ for any $k$ such that $p \leq 2k$, we conclude the proof.
\end{proof}

\begin{proposition}\label{prop23}
Suppose that $\bs f\in\bs L^\infty(Q_T)$,  $\bs u_0\in\K$ and let $\bs u_{\eps\delta}$ be the solution of problem \eqref{ap}. Then, for $1\leq p<\infty$, there exists a positive constant $\overline C_p$, depending on $p$, $\|\bs f\|_{\bs L^\infty(Q_T)}$ and $\|\bs u_0\|_{\bs L^2(\Omega)}$, but independent of $\eps$ and $\delta$, such that
	
	\begin{equation*}
	\|\widehat k_{\eps\delta}\|_{L^p(Q_T)}\le \overline C_p,
\end{equation*}
where, for $C_p$ given by \eqref{estup},
\begin{equation}\label{kepsp}
\overline C_p = \Big(\|\bs f\|_{\bs L^\infty(Q_T)}^p(C_p)^p+\tfrac{\delta_0^{p-1}p}2\|\bs u_0\|^2_{\bs L^2(\Omega)}+\delta_0^p p|Q_T|\Big)^\frac1p.
\end{equation}
\end{proposition}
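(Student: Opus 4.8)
The plan is to test equation \eqref{ap} with $\widehat k_{\eps\delta}^{p-1}\bs u_{\eps\delta}$ and integrate over $Q_T$, which produces the reaction term $\int_{Q_T}\widehat k_{\eps\delta}^p|\bs u_{\eps\delta}|^2$, and then to compare this with the quantity $\int_{Q_T}\widehat k_{\eps\delta}^p$ we actually want to bound, using the pointwise sign information. Since $k_{\eps\delta}(s)=\delta$ for $s\le0$, on the set $\{|\bs u_{\eps\delta}|\le1\}$ one has $\widehat k_{\eps\delta}=\delta$, whereas on $\{|\bs u_{\eps\delta}|>1\}$ one has $|\bs u_{\eps\delta}|^2>1$ and hence $\widehat k_{\eps\delta}^p\le\widehat k_{\eps\delta}^p|\bs u_{\eps\delta}|^2$. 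Splitting $Q_T$ accordingly gives
\[\int_{Q_T}\widehat k_{\eps\delta}^p\le\delta_0^p|Q_T|+\int_{Q_T}\widehat k_{\eps\delta}^p|\bs u_{\eps\delta}|^2.\]

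For the terms coming from the test function I would treat the diffusion and the time derivative by the chain rule. Writing $\widehat k_{\eps\delta}^{p-1}=g(|\bs u_{\eps\delta}|^2)$ with $g$ nondecreasing and nonnegative, integration by parts in space (the boundary term vanishing since $\bs u_{\eps\delta}\in\bs H^1_0(\Omega)$) gives
\[\int_{Q_T}\nabla\bs u_{\eps\delta}:\nabla\big(\widehat k_{\eps\delta}^{p-1}\bs u_{\eps\delta}\big)=\int_{Q_T}\widehat k_{\eps\delta}^{p-1}|\nabla\bs u_{\eps\delta}|^2+\tfrac12\int_{Q_T}g'(|\bs u_{\eps\delta}|^2)\big|\nabla(|\bs u_{\eps\delta}|^2)\big|^2,\]
which is nonnegative by monotonicity of $g$ and may be discarded. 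For the time term, introducing the primitive $G(s)=\tfrac12\int_0^sg$, one has $\int_{Q_T}\partial_t\bs u_{\eps\delta}\cdot\widehat k_{\eps\delta}^{p-1}\bs u_{\eps\delta}=\int_\Omega G(|\bs u_{\eps\delta}(T)|^2)-\int_\Omega G(|\bs u_0|^2)$; since $G\ge0$ and, because $|\bs u_0|\le1$ with $g\equiv\delta^{p-1}$ on $[0,1]$, $G(|\bs u_0|^2)=\tfrac12\delta^{p-1}|\bs u_0|^2$, this contributes at most $\tfrac12\delta_0^{p-1}\|\bs u_0\|_{\bs L^2(\Omega)}^2$. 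Collecting the terms yields
\[\int_{Q_T}\widehat k_{\eps\delta}^p|\bs u_{\eps\delta}|^2\le\|\bs f\|_{\bs L^\infty(Q_T)}\int_{Q_T}\widehat k_{\eps\delta}^{p-1}|\bs u_{\eps\delta}|+\tfrac12\delta_0^{p-1}\|\bs u_0\|_{\bs L^2(\Omega)}^2.\]

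It then remains to handle the source term by Young's inequality with exponents $p$ and $p/(p-1)$, writing $\|\bs f\|_{\bs L^\infty(Q_T)}\widehat k_{\eps\delta}^{p-1}|\bs u_{\eps\delta}|\le\tfrac1p\|\bs f\|_{\bs L^\infty(Q_T)}^p|\bs u_{\eps\delta}|^p+\tfrac{p-1}p\widehat k_{\eps\delta}^p$. Inserting this together with the first displayed inequality, the term $\tfrac{p-1}p\int_{Q_T}\widehat k_{\eps\delta}^p$ is absorbed by the left-hand side, leaving $\tfrac1p\int_{Q_T}\widehat k_{\eps\delta}^p$; multiplying by $p$ and using $\int_{Q_T}|\bs u_{\eps\delta}|^p\le(C_p)^p$ from \eqref{estup} produces precisely the constant $\overline C_p$ of \eqref{kepsp}. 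The case $p=1$ is immediate, since then the test function is simply $\bs u_{\eps\delta}$, no Young step is needed, and the conclusion coincides with the $L^1$ bound in \eqref{estes} combined with the same splitting.

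The main obstacle I anticipate is not the algebra but the rigorous justification of using $\widehat k_{\eps\delta}^{p-1}\bs u_{\eps\delta}$ as a test function: for fixed $\eps$ one must know the solution is regular enough (for instance $\partial_t\bs u_{\eps\delta}\in\bs L^2(Q_T)$ and $\widehat k_{\eps\delta}^{p-1}\bs u_{\eps\delta}\in L^2(0,T;\bs H^1_0(\Omega))$) for the spatial integration by parts and the chain-rule identity governing the time term to be valid. Because $\widehat k_{\eps\delta}$ grows exponentially in $|\bs u_{\eps\delta}|^2$, the finiteness of $\int_{Q_T}\widehat k_{\eps\delta}^{p-1}|\nabla\bs u_{\eps\delta}|^2$ and of $\int_\Omega G(|\bs u_{\eps\delta}|^2)$ must be verified, which is exactly where the a priori bounds of Proposition \ref{prop2.2}, and in particular the $\bs L^p$ control of $\bs u_{\eps\delta}$ for every finite $p$, are essential; a truncation or approximation argument may be required to render the test function admissible before the estimates are derived.
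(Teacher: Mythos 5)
Your proof is correct and follows essentially the same route as the paper's: the same test function $\widehat k_{\eps\delta}^{p-1}\bs u_{\eps\delta}$, the same chain-rule argument discarding the nonnegative diffusion contribution, the same primitive for the time term (the paper's $\Psi_{\eps\delta}(|\bs u_{\eps\delta}|^2-1)$ versus your $G(|\bs u_{\eps\delta}|^2)$ is a purely cosmetic renormalization), the same splitting of $Q_T$ into $\{|\bs u_{\eps\delta}|\le1\}$ and its complement, and the same Young-inequality absorption yielding exactly \eqref{kepsp}. The admissibility issue you flag for the test function is likewise left implicit in the paper, so it does not mark a divergence between the two arguments.
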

\begin{proof}
Evaluating the inner product of $\widehat k_{\eps\delta}^{p-1}\bs u_{\eps\delta}$ with the first equation of \eqref{ap} and integrating over $Q_t$, we get
\begin{equation*}
	\tfrac12\int_{Q_t}(\widehat k_{\eps\delta})^{p-1}\partial_t\big(|\bs u_{\eps\delta}|^2\big)+\int_{Q_t}-\Delta\bs u_{\eps\delta}\cdot\big((\widehat k_{\eps\delta})^{p-1} \bs u_{\eps\delta}\big)+\int_{Q_t}\big(\widehat k_{\eps\delta}\big)^p|\bs u_{\eps\delta}|^2=\int_{Q_t}(\widehat k_{\eps\delta})^{p-1}\bs f\cdot\bs u_{\eps\delta}.
\end{equation*}
Setting 
\begin{equation}\label{psieps}
	\Psi_{\eps\delta}(s)=\displaystyle\int_0^s(k_{\eps\delta}(\tau))^{p-1}d\tau,
\end{equation} we have 
$\Psi_{\eps\delta}(|\bs u|^2-1)\ge0$ when $|\bs u|>1$ and $\Psi_{\eps\delta}(|\bs u|^2-1)=\delta^{p-1}(|\bs u|^2-1)\ge-\delta^{p-1}$ otherwise. So, as $|\bs u_0|\le1$, we have
\begin{align}\label{kdt}
\nonumber \tfrac12\int_{Q_t}(\widehat k_{\eps\delta})^{p-1}\partial_t\big(|\bs u_{\eps\delta}|^2\big)
&=\tfrac12\int_{Q_t}\partial_t\big(\Psi_{\eps\delta}(|\bs u_{\eps\delta}|^2-1)\big)
=\tfrac12\int_{\Omega}\Psi_{\eps\delta}(|\bs u_{\eps\delta}(t)|^2-1)-\tfrac12\int_{\Omega} \Psi_{\eps\delta}(|\bs u_0|^2-1)\\
&\ge -\tfrac12\delta^{p-1}|\Omega|-\tfrac12\delta^{p-1}\int_\Omega(|\bs u_0|^2-1)=-\tfrac{\delta^{p-1}}2\|\bs u_0\|^2_{\bs L^2(\Omega)}.
\end{align}
As $\bs u_{\eps\delta}=(u_{1\eps\delta},\ldots,u_{N\eps\delta})$, we get
$$\int_{Q_T}-\Delta\bs u_{\eps\delta}\cdot\big((\widehat k_{\eps\delta})^{p-1} \bs u_{\eps\delta}\big)=\sum_{i=1}^N\int_{Q_T}-\Delta u_{i\eps\delta}(\widehat k_{\eps\delta})^{p-1} u_{i\eps\delta}$$
and we are going to evaluate each parcel of the right hand side. To simplify the notations, as $\eps$ and $\delta$ are fixed, we replace $\bs u_{\eps\delta}$ by $\bs u$ and $u_{i\eps\delta}$ by $u_i$. We obtain
	\begin{equation*}
		\int_{Q_T}-\Delta u_i(\widehat k_{\eps\delta})^{p-1} u_i=\int_{Q_T}\widehat k_{\eps\delta}^{p-1}|\nabla u_i|^2+\int_{Q_T}u_i\nabla u_i\cdot(p-1) (\widehat k_{\eps\delta})^{p-2}k'_{\eps\delta}(|\bs u|^2-1)\nabla (|\bs u|^2-1) .
	\end{equation*}
Obviously, the first term of the right hand side is non-negative. We wish to prove that the  other term  is also non-negative. As $(p-1) (\widehat k_{\eps\delta})^{p-2}k'_{\eps\delta}\ge0$, it is enough to show that $\sum_{i=1}^N u\nabla u_\cdot \nabla (|\bs u|^2-1)\ge0$. But
	\begin{equation*}
	\sum_{i=1}^N	u_i\nabla u_i\cdot\nabla (|\bs u|^2-1)=\tfrac12\nabla(|\bs u|^2)\cdot\nabla (|\bs u|^2-1)=\tfrac12\big|\nabla (|\bs u|^2-1)\big|^2\ge0.
	\end{equation*}
	
Then, using \eqref{kdt} and Cauchy-Schwarz and Young inequalities, we have
	\begin{align*}
\int_{Q_T}\big(\widehat k_{\eps\delta}\big)^p|\bs u_{\eps\delta}|^2&\le \int_{Q_T}(\widehat k_{\eps\delta})^{p-1}\bs f\cdot\bs u_{\eps\delta}+\tfrac{\delta^{p-1}}2\|\bs u_0\|^2_{\bs L^2(\Omega)}\\
&\le \int_{Q_T}(\widehat k_{\eps\delta})^{p-1}|\bs u_{\eps\delta}|\|\bs f\|_{\bs L^\infty(Q_T)}+\tfrac{\delta^{p-1}}2\|\bs u_0\|^2_{\bs L^2(\Omega)}\\
&\le \tfrac1{p'}\int_{Q_T}\big(\widehat k_{\eps\delta})^p+\tfrac1p\|\bs f\|_{\bs L^\infty(Q_T)}^p\int_{Q_T}|\bs u_{\eps\delta}|^p+\tfrac{\delta^{p-1}}2\|\bs u_0\|^2_{\bs L^2(\Omega)}
	\end{align*}
	and, arguing as in the inequality \eqref{est2} (where we considered $p=1$), we get
$$\int_{Q_T}\big(\widehat k_{\eps\delta}\big)^p|\bs u_{\eps\delta}|^2=\int_{Q_T}\big(\widehat k_{\eps\delta}\big)^p\big(|\bs u_{\eps\delta}|^2-1\big)+\int_{Q_T}\big(\widehat k_{\eps\delta}\big)^p\ge-\delta^p|Q_T|+\int_{Q_T}\big(\widehat k_{\eps\delta}\big)^p.$$
Then
	$$
\tfrac1p\int_{Q_T}\big(\widehat k_{\eps\delta}\big)^p\le \tfrac1p\|\bs f\|_{\bs L^\infty(Q_T)}^p\int_{Q_T}|\bs u_{\eps\delta}|^p+\tfrac{\delta^{p-1}}2\|\bs u_0\|^2_{\bs L^2(\Omega)}+\delta^p|Q_T|$$
and, using inequality \eqref{estup}, we obtain the conclusion.
\end{proof}

\begin{proposition}
Suppose $\bs f\in\bs L^\infty(Q_T)$,  $\bs u_0\in\K$ and let $\bs u_{\eps\delta}$ be the solution of problem \eqref{ap}. Then there exists a positive constant $C$, depending on $\|\bs f\|_{\bs L^\infty(Q_T)}$ and $\|\nabla\bs u_0\|_{\bs L^2(\Omega)}$, but independent of $\eps$ and $\delta$, such that
	\begin{equation}\label{dt}
	\|\partial_t\bs u_{\eps\delta}\|_{\bs L^2(Q_T)}\le C.
	\end{equation}
\end{proposition}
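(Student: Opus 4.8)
The plan is to derive \eqref{dt} by the standard device of testing the equation in \eqref{ap} with $\partial_t\bs u_{\eps\delta}$ and integrating over $Q_t$. Multiplying the first equation of \eqref{ap} by $\partial_t\bs u_{\eps\delta}$ and integrating gives
$$\int_{Q_t}|\partial_t\bs u_{\eps\delta}|^2+\int_{Q_t}(-\Delta\bs u_{\eps\delta})\cdot\partial_t\bs u_{\eps\delta}+\int_{Q_t}\widehat k_{\eps\delta}\,\bs u_{\eps\delta}\cdot\partial_t\bs u_{\eps\delta}=\int_{Q_t}\bs f\cdot\partial_t\bs u_{\eps\delta}.$$
After integrating by parts in space, the second term becomes $\tfrac12\|\nabla\bs u_{\eps\delta}(t)\|^2_{\bs L^2(\Omega)}-\tfrac12\|\nabla\bs u_0\|^2_{\bs L^2(\Omega)}$, which is exactly what produces the dependence on $\|\nabla\bs u_0\|_{\bs L^2(\Omega)}$ announced in the statement; this is also why the hypothesis $\bs u_0\in\K\subset\bs H^1_0(\Omega)$ is needed.

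The key new ingredient is the treatment of the nonlinear term, handled by a primitive exactly as in \eqref{kdt}. Introducing $K_{\eps\delta}(s)=\int_0^sk_{\eps\delta}(\tau)\,d\tau$, I would recognize $\widehat k_{\eps\delta}\,\bs u_{\eps\delta}\cdot\partial_t\bs u_{\eps\delta}=\tfrac12\widehat k_{\eps\delta}\,\partial_t|\bs u_{\eps\delta}|^2=\tfrac12\partial_t\big(K_{\eps\delta}(|\bs u_{\eps\delta}|^2-1)\big)$, so this term integrates to $\tfrac12\int_\Omega K_{\eps\delta}(|\bs u_{\eps\delta}(t)|^2-1)-\tfrac12\int_\Omega K_{\eps\delta}(|\bs u_0|^2-1)$. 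Since $k_{\eps\delta}\ge\delta\ge0$, the primitive satisfies $K_{\eps\delta}(s)\ge0$ for $s\ge0$ and $K_{\eps\delta}(s)=\delta s$ for $s\le0$; hence $K_{\eps\delta}(|\bs u_{\eps\delta}(t)|^2-1)\ge-\delta\ge-\delta_0$ pointwise, while the initial contribution is non-negative because $|\bs u_0|\le1$. This is the same one-sided bound already exploited in \eqref{est2}, and it shows the nonlinear term is bounded below by $-\tfrac{\delta_0}2|\Omega|$, with a constant independent of both $\eps$ and $\delta$.

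Combining these facts with Cauchy--Schwarz and Young on the right-hand side, $\int_{Q_t}\bs f\cdot\partial_t\bs u_{\eps\delta}\le\tfrac12\|\bs f\|^2_{\bs L^2(Q_T)}+\tfrac12\int_{Q_t}|\partial_t\bs u_{\eps\delta}|^2$, I would absorb half of the $\partial_t\bs u_{\eps\delta}$ norm into the left and discard the non-negative gradient term, arriving at $\int_{Q_T}|\partial_t\bs u_{\eps\delta}|^2\le\|\nabla\bs u_0\|^2_{\bs L^2(\Omega)}+\delta_0|\Omega|+\|\bs f\|^2_{\bs L^2(Q_T)}$. Bounding $\|\bs f\|_{\bs L^2(Q_T)}\le|Q_T|^{1/2}\|\bs f\|_{\bs L^\infty(Q_T)}$ yields \eqref{dt} with a constant of the stated type, uniform in $\eps$ and $\delta$.

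The main obstacle is that this computation is only formal: testing with $\partial_t\bs u_{\eps\delta}$ presupposes $\partial_t\bs u_{\eps\delta}\in\bs L^2(Q_T)$, which is precisely the regularity the proposition is meant to establish, and the integration by parts requires enough smoothness to justify $\int_\Omega\nabla\bs u_{\eps\delta}:\nabla\partial_t\bs u_{\eps\delta}=\tfrac12\tfrac{d}{dt}\|\nabla\bs u_{\eps\delta}\|^2_{\bs L^2(\Omega)}$. I would make it rigorous by performing the same estimate at the level of the Galerkin (or time-difference-quotient) scheme underlying the existence proof for \eqref{ap}, where the approximants are smooth enough for all the manipulations; since every constant produced is independent of the discretization parameter as well as of $\eps$ and $\delta$, the bound \eqref{dt} passes to the limit by weak lower semicontinuity of the $\bs L^2(Q_T)$ norm.
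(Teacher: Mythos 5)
Your proof is correct and follows essentially the same route as the paper: testing with $\partial_t\bs u_{\eps\delta}$, rewriting the nonlinear term via the primitive of $k_{\eps\delta}$ (your $K_{\eps\delta}$ is exactly the paper's $\Psi_{\eps\delta}$ from \eqref{psieps} with $p=2$), and using the same one-sided bounds $K_{\eps\delta}(|\bs u_{\eps\delta}(t)|^2-1)\ge-\delta$ and $K_{\eps\delta}(|\bs u_0|^2-1)=\delta(|\bs u_0|^2-1)$ together with Young's inequality. Your closing remark about justifying the formal computation at the Galerkin level is a point of rigor the paper passes over silently, but it does not change the substance of the argument.
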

\begin{proof}
	Evaluating the inner product of $\partial_t\bs u_{\eps\delta}$ with the first the equation of \eqref{ap} and integrating over $Q_t$, we obtain
	$$\int_{Q_t}|\partial_t\bs u_{\eps\delta}|^2+\tfrac12\int_{Q_t}\partial_t(|\nabla\bs u_{\eps\delta}|^2)+\tfrac12\int_{Q_t}\widehat k_{\eps\delta}\partial_t(|\bs u_{\eps\delta}|^2)=\int_{Q_t}\bs f\cdot\partial_t\bs u_{\eps\delta}.$$
	So, with the notation of \eqref{psieps} for $p=2$, using Cauchy-Schwarz and Young inequalities, we obtain
	\begin{multline*}
		\int_{Q_t}|\partial_t\bs u_{\eps\delta}|^2+\tfrac12\int_\Omega|\nabla \bs u_{\eps\delta}(t)|^2+\tfrac12\int_\Omega \Psi_{\eps\delta}(|\bs u_{\eps\delta}(t)|^2-1)\\
		\le\tfrac12\int_{Q_t}|\bs f|^2+\tfrac12\int_{Q_t}|\partial_t\bs u_{\eps\delta}|^2
		+\tfrac12\int_\Omega|\nabla\bs u_0|^2+\tfrac12\int_\Omega\Psi_{\eps\delta}(|\bs u_0|^2-1).
	\end{multline*}
	
	We saw, in the proof of the previous proposition, that $\Psi_{\eps\delta}(|\bs u_{\eps\delta}(t)|^2-1)\ge-\delta$ and $\Psi_{\eps\delta}(|\bs u_0|^2-1)=\delta(|\bs u_0|^2-1)$.
	Therefore,
$$\|\partial_t\bs u_{\eps\delta}\|^2_{\bs L^2(Q_T)}+\|\nabla\bs u_{\eps\delta}\|_{L^\infty(0,T;\bs L^2(\Omega))}^2\le \delta_0|\Omega|+\|\bs f\|_{\bs L^2(Q_T)}^2+\|\nabla\bs u_0\|^2_{\bs L^2(\Omega)}+\delta_0\|\bs u_0\|_{\bs L^2(\Omega)}^2.$$
\end{proof}

\section{The  variational inequality and the Lagrange multiplier system}

In this section we prove Theorems \ref{T1} and \ref{T2}.

\begin{proof} {\em of Theorem \ref{T1}}

The {\em a priori} estimates we obtained in Section \ref{sec2} allow us to pass to the limit, when $\eps\rightarrow0$, of a subsequence of $(u_{\eps\delta})_\eps$:
	\begin{align}\label{convergencias}
		&\nabla \bs u_{\eps\delta}\underset{\eps\rightarrow0}\lraup \nabla\bs u_\delta\quad\text{ in }\bs L^2(Q_T)\text{-weak},&\qquad\partial_t\bs u_{\eps\delta}\underset{\eps\rightarrow0}\lraup \partial_t\bs u_\delta\quad\text{ in }\bs L^2(Q_T)\text{-weak},\\
	\nonumber	&\bs u_{\eps\delta}\underset{\eps\rightarrow0}\longrightarrow\bs u_\delta\quad\text{ in }\bs L^2(Q_T).&
	\end{align}
	
As $\bs u_{\eps\delta}(0)=\bs u_0$ in $\Omega$ and $u_{\eps\delta}=0$ on $\partial \Omega \times (0,T)$, the same is true for $\bs u_\delta$.
	
Setting $A_{\eps\delta}=\{(x,t)\in Q_T:|\bs u_{\eps\delta}|^2-1\ge\eps\}$ and recalling that $e^\frac{s}\eps-1\ge\tfrac{s}\eps$ if $s>0$,
\begin{align*}
	\int_{Q_T}\big(|\bs u_\delta|^2-1\big)^+&=\lim_{\eps\rightarrow0}\int_{Q_T}\big(|\bs u_{\eps\delta}|^2-1\big)^+\le\lim_{\eps\rightarrow0}\Big(\int_{Q_T\setminus A_{\eps\delta}}\eps+\int_{A_{\eps\delta}}\eps\widehat k_{\eps\delta}\Big)\\
	&\le\lim_{\eps\rightarrow0}\big(\eps|Q_T|+\eps\|\widehat k_{\eps\delta}\|_{L^1(Q_T)}\big)=0.
\end{align*}
So $(|\bs u_\delta|^2-1)^+\equiv0$ and thus
\begin{equation*}
\bs u_\delta(t)\in\K\ \text{ for a.e. $t\in(0,T)$}.
\end{equation*}

Given $\bs v\in\K$, evaluating the inner product of $\bs v-\bs u_{\eps\delta}(t)$ with the first the equation of \eqref{ap} and integrating over $\Omega\times(s,t)$, with $0\le s<t\le T$, we obtain
\begin{multline*}
	\int_s^t\int_\Omega\partial_t\bs u_{\eps\delta}\cdot\big(\bs v-\bs u_{\eps\delta}\big)+\int_s^t\int_\Omega\nabla \bs u_{\eps\delta}:\nabla\big(\bs v-\bs u_{\eps\delta}\big)\\
	+\int_s^t\int_\Omega \widehat k_{\eps\delta}\bs u_{\eps\delta}\cdot\big(\bs v-\bs u_{\eps\delta}\big)
	=\int_s^t\int_\Omega\bs f\cdot\big(\bs v-\bs u_{\eps\delta}\big).
\end{multline*}
Observe that
 \begin{equation*}
 \nabla \bs u_{\eps\delta}(t):\nabla\big(\bs v-\bs u_{\eps\delta}(t)\big)=\nabla\big(\bs u_{\eps\delta}(t)-\bs v\big):\nabla\big(\bs v-\bs u_{\eps\delta}(t)\big)+\nabla\bs v:\nabla\big(\bs v-\bs u_{\eps\delta}(t) \big)\le \nabla\bs v:\nabla\big(\bs v-\bs u_{\eps\delta}(t)\big)
 \end{equation*}
and that, using the Cauchy-Schwarz inequality and recalling that $|\bs v|\le 1$ and $\widehat k_{\eps\delta}=\delta$ in $\{|\bs u_{\eps\delta}|<1\}$,
\begin{equation*}
\big( \widehat k_{\eps\delta}-\delta\big)\bs u_{\eps\delta}(t)\cdot\big(\bs v-\bs u_{\eps\delta}(t)\big)\le \big( \widehat k_{\eps\delta}-\delta\big)|\bs u_{\eps\delta}(t)|\big(|\bs v|-|\bs u_{\eps\delta}(t)|\big)\le \big( \widehat k_{\eps\delta}-\delta\big)|\bs u_{\eps\delta}(t)|\big(1-|\bs u_{\eps\delta}(t)|\big)\le0.
 \end{equation*}
So we have
  \begin{multline*}
 \int_s^t	\int_\Omega\partial_t\bs u_{\eps\delta}\cdot\big(\bs v-\bs u_{\eps\delta}\big)+\int_s^t\int_\Omega\nabla \bs v:\nabla\big(\bs v-\bs u_{\eps\delta}\big)\\
 	+\delta\int_s^t\int_\Omega\bs u_{\eps\delta}\cdot\big(\bs v-\bs u_{\eps\delta}\big)
 	\ge\int_s^t\int_\Omega\bs f\cdot\big(\bs v-\bs u_{\eps\delta}\big).
 \end{multline*}
Letting $\eps\rightarrow0$ and because $s$ and $t$ are arbitrary, using the convergences in \eqref{convergencias}, we obtain, for all $\bs v\in\K$,
   \begin{equation*}
 	\int_\Omega\partial_t\bs u_\delta(t)\cdot\big(\bs v-\bs u_\delta(t)\big)+\int_\Omega\nabla \bs v:\nabla\big(\bs v-\bs u_\delta(t)\big)	+\delta\int_\Omega\bs u_\delta(t)\cdot\big(\bs v-\bs u_\delta(t)\big)
 	\ge\int_\Omega\bs f(t)\cdot\big(\bs v-\bs u_\delta(t)\big).
 \end{equation*}
Given any $\bs v\in\K$, we choose now $\bs w=\bs u_\delta(t)+\theta\big(\bs v-\bs u_\delta(t)\big)$ as test function, with $\theta\in(0,1]$, noting that $\bs w\in\K$. Then
    \begin{multline*}
 	\theta\int_\Omega\partial_t\bs u_\delta(t)\cdot\big(\bs v-\bs u_\delta(t)\big)+\theta\int_\Omega\nabla \big(\bs u_\delta(t)+\theta(\bs v-\bs u_\delta)\big):\nabla\big(\bs v-\bs u_\delta(t)\big)\\
 	+\theta\delta\int_\Omega\bs u_\delta(t)\cdot\big(\bs v-\bs u_\delta(t)\big)
 	\ge\theta\int_\Omega\bs f(t)\cdot\big(\bs v-\bs u_\delta(t)\big)
 \end{multline*}
 and, dividing both members by $\theta$ and letting afterwards $\theta\rightarrow0$, we conclude that $\bs u_\delta$ satisfies \eqref{vi}.
 
 Note that,  as $|\bs u_\delta|\le 1$ and using \eqref{estup},
\begin{align*}
\int_{Q_T}|\bs u_{\eps\delta}-\bs u_\delta|^p=\int_{Q_T}|\bs u_{\eps\delta}-\bs u_\delta|^{p-1}|\bs u_{\eps\delta}-\bs u_\delta|&\le \|\bs u_{\eps\delta}-\bs u_\delta\|_{\bs L^{2p-2}(Q_T)}^{p-1}\|\bs u_{\eps\delta}-\bs u_\delta\|_{\bs L^2(Q_T)}\\
&\le \Big(\int_{Q_T}\big(|\bs u_{\eps\delta}|+1\big)^{2p-2}\Big)^\frac12 \|\bs u_{\eps\delta}-\bs u_\delta\|_{\bs L^2(Q_T)}
\end{align*}
and we conclude that
\begin{equation}\label{convp}
\bs u_{\eps\delta}\underset{\eps\rightarrow0}\longrightarrow\bs u_\delta\quad\text{ in }\bs L^p(Q_T).
\end{equation}

It remains to show that the solution of \eqref{vi} is unique, which is straightforward, since the convex $\K$ is fixed. In fact, if $\bs u_{1\delta}$ and $\bs u_{2\delta}$ are two solutions, we may use the second function as test function for the VI solved by the first one, and reciprocally, obtaining immediately that
 $$\tfrac12\int_{\Omega}|\bs u_{1\delta}(t)-\bs u_{1\delta}(t)|^2+\int_{Q_t}|\nabla(\bs u_{1\delta}-\bs u_{2\delta})|^2+\delta\int_{Q_t}|\bs u_{1\delta}-\bs u_{2\delta}|^2\le0,$$
 and the conclusion follows.
\end{proof}

\begin{proof} {\em of Theorem \ref{T2}}

We start by proving the uniqueness of the Lagrange multiplier $\lambda_\delta$. Suppose there exist $\lambda_{1\delta},\lambda_{2\delta} \in L^p(Q_T)$, for some $p \in [1,\infty)$, such that $(\lambda_{1\delta},\bs u_\delta)$ and $(\lambda_{2\delta},\bs u_\delta)$ solve \eqref{lm}. Subtracting the first equality in \eqref{lm} for $\lambda_{2\delta}$ from the one for $\lambda_{1\delta}$ and setting $\xi=\lambda_{1\delta}-\lambda_{2\delta}$, we get
$$\xi\bs u_{\delta}=0\quad\text{a.e. in }Q_T.$$
We know that, in the set $\{|\bs u_{\delta}|<1\}$, we have $\lambda_{1\delta}=\lambda_{2\delta}=\delta$. Otherwise, in the set $\{|\bs u_{\delta}|=1\}$, $\bs u_{\delta}$ is never the null vector and thus, by the equality above, $\xi\equiv 0$ in this set.

Recalling the estimate \eqref{estes} and the uniqueness of $u_\delta$, we have the convergences in \eqref{convergencias} for the whole sequence. From the proof of the previous theorem, we also have the convergence \eqref{convp}, $\bs u_\delta(0)=\bs u_0$ and $\bs u_\delta=0$ on $\partial\Omega\times(0,T)$. Using the estimate \eqref{kepsp}, there exists a unique $\lambda_\delta$ such that $\widehat k_{\eps\delta}\underset{\eps\rightarrow0}\lraup \lambda_\delta$ in $L^p(Q_T)$-weak, for any $p \in [1,\infty)$.
	
Evaluating the inner product of $\bs\varphi\in L^2\big(0,T;\bs H^1_0(\Omega)\big)$ with the first equation of \eqref{ap} and integrating over $Q_T$, we obtain
	$$\int_{Q_T}\partial_t\bs u_{\eps\delta}\cdot\bs \varphi+\int_{Q_T}\nabla\bs u_{\eps\delta}:\nabla\bs\varphi+\int_{Q_T}\widehat k_{\eps\delta}\bs u_{\eps\delta}\cdot\bs\varphi=\int_{Q_T}\bs f\cdot\bs\varphi.$$
	We can pass to the limit for a subsequence, as in \eqref{convergencias}, when $\eps\rightarrow0$, recalling that $\widehat k_{\eps\delta}$ converges weakly to $\lambda_\delta$ in $L^p(Q_T)$, and $\bs u_{\eps\delta}$ converges strongly to $\bs u_\delta$ in $\bs L^p(Q_T)$, getting
	\begin{equation}\label{eqlm}
		\int_{Q_T}\partial_t\bs u_{\delta}\cdot\bs \varphi+\int_{Q_T}\nabla\bs u_{\delta}:\nabla\bs\varphi+\int_{Q_T}\lambda_\delta\bs u_{\delta}\cdot\bs\varphi=\int_{Q_T}\bs f\cdot\bs\varphi.
		\end{equation}
	
As $\widehat k_{\eps\delta}$ converges weakly to $\lambda_\delta$ in $L^p(Q_T)$  and $k_{\eps\delta}\ge \delta$, then $\lambda_\delta\ge \delta$.

 Observing that $(\widehat k_{\eps\delta}-\delta)(|\bs u_{\eps\delta}|-1)\ge0$ and recalling that $\bs u_{\eps\delta}$ converges strongly to $\bs u_\delta$ in $\bs L^p(Q_T)$, then $(\lambda_\delta-\delta)(|\bs u_\delta|-1)\ge0$. But, on the other hand, as $|\bs u_\delta|\le 1$ then $(\lambda_\delta-\delta)(|\bs u_\delta|-1)\leq 0$.

Now we prove that $\bs u_\delta$ solves the variational inequality \eqref{vi}. To do so, note that, given $\bs v\in\K$, 
\begin{equation}\label{aqui}
	(\lambda_\delta-\delta)\bs u_\delta\cdot(\bs v-\bs u_\delta)\le (\lambda_\delta-\delta)|\bs u_\delta|(|\bs v|-|\bs u_\delta|)\le (\lambda_\delta-\delta)|\bs u_\delta|(1-|\bs u_\delta|)=0.
	\end{equation}
So, using $\bs v-\bs u_\delta$ as test function in \eqref{eqlm}, we obtain
	\begin{equation*}
	\int_{Q_T}\partial_t\bs u_{\delta}\cdot(\bs v-\bs u_\delta)+\int_{Q_T}\nabla\bs u_{\delta}:\nabla(\bs v-\bs u_\delta)+\int_{Q_T}((\lambda_\delta-\delta)+\delta)\bs u_{\delta}\cdot(\bs v-\bs u_\delta)=\int_{Q_T}\bs f\cdot(\bs v-\bs u_\delta)
\end{equation*}
and, applying \eqref{aqui}, we get
	\begin{equation*}
	\int_{Q_T}\partial_t\bs u_{\delta}\cdot(\bs v-\bs u_\delta)+\int_{Q_T}\nabla\bs u_{\delta}:\nabla(\bs v-\bs u_\delta)+\delta\int_{Q_T}\bs u_\delta\cdot(\bs v-\bs u_\delta)\ge\int_{Q_T}\bs f\cdot(\bs v-\bs u_\delta).
\end{equation*}
Recalling that the solution of the variational inequality \eqref{vi} is unique, we conclude that
	\begin{equation*}
	\int_{\Omega}\partial_t\bs u_{\delta}(t)\cdot(\bs v-\bs u_\delta(t))+\int_{\Omega}\nabla\bs u_{\delta}(t):\nabla(\bs v-\bs u_\delta(t))+\delta\int_{\Omega}\bs u_\delta(t)\cdot(\bs v-\bs u_\delta(t))\ge\int_{\Omega}\bs f(t)\cdot(\bs v-\bs u_\delta(t)).
\end{equation*}
	
\end{proof}
	
	\begin{remark}
		An interesting question is what happens if we replace the constraint $1$ by a non-negative function $g=g(x,t)$. Several estimates remain true. However, it is not obvious how to extend the estimate in Proposition \ref{prop23} to this case.
		
		As the estimate $\|\widehat k_{\eps\delta}\|_{L^1(Q_T)}	\le C$, $C$ independent of $\eps$ and $\delta$, remains true if we replace $1$ by $g$, the argument of taking the  generalized limit of a subsequence $\widehat k_{\eps\delta}$ in $L^\infty(Q_T)'$, introduced in \cite{AzevedoMirandaSantos2013}, seems possible to apply in the context of this paper, proving existence of a solution  of problem \eqref{lm} in a much weaker sense. 
		\end{remark}
	
	\section{Continuous dependence}
	
	In this section, we study the behaviour of the solution $(\lambda_{n\delta},\bs u_{n\delta})$ of problem \eqref{lm}, with data $(\bs f_n,\bs u_{n0})$ converging to $(\bs f,\bs u_0)$.
		
		\begin{theorem}
		Let, for $n\in\N$, $\bs f_n,\bs f\in\bs L^\infty(Q_T)$, $\bs u_{n0},\bs u_0\in\K$, and suppose that
			\begin{equation}\label{fnf}
				\bs f_n\underset{n}\longrightarrow\bs f\quad\text{ in }\bs L^\infty(Q_T),\qquad\bs u_{n0}\underset{n}\longrightarrow\bs u_0\quad\text{ in }\bs H^1_0(\Omega).
			\end{equation}
			If $(\lambda_{n\delta},\bs u_{n\delta})$ and $(\lambda_\delta,\bs u_\delta)$ denote the unique solutions of problem \eqref{lm} with data $(\bs f_n,\bs u_{n0})$ and $(\bs f,\bs u_0),$ respectively, then, for any $1\leq p<\infty$,
			\begin{equation*}
				\lambda_{n\delta}\underset n\lraup\lambda_\delta \quad\text{ in }L^p(Q_T)\text{-weakly}\quad\text{ and }\quad\bs u_{n\delta}\underset n\longrightarrow\bs u_\delta\quad\text{ in }L^\infty\big(0,T;\bs L^2(\Omega)\big)\cap L^2\big(0,T;\bs H^1_0(\Omega)\big)\cap \bs L^p(Q_T).
			\end{equation*}
		\end{theorem}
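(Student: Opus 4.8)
The plan is to exploit two structural features. First, the convex set $\K$ in \eqref{k} is \emph{fixed}: it does not depend on the data $(\bs f,\bs u_0)$, so $\bs u_{n\delta}(t)$ and $\bs u_\delta(t)$ are admissible test functions for each other's variational inequality. Second, all the \emph{a priori} estimates of Section \ref{sec2} depend on the data only through $\|\bs f\|_{\bs L^\infty(Q_T)}$, $\|\bs u_0\|_{\bs L^2(\Omega)}$ and $\|\nabla\bs u_0\|_{\bs L^2(\Omega)}$. Since \eqref{fnf} makes these quantities bounded uniformly in $n$, Propositions \ref{prop2.2}, \ref{prop23} and the estimate \eqref{dt} pass to the limit $\eps\to0$ and yield, uniformly in $n$,
\[\|\bs u_{n\delta}\|_{L^\infty(0,T;\bs L^2(\Omega))}+\|\nabla\bs u_{n\delta}\|_{\bs L^2(Q_T)}+\|\partial_t\bs u_{n\delta}\|_{\bs L^2(Q_T)}\le C,\qquad \|\lambda_{n\delta}\|_{L^p(Q_T)}\le\overline C_p,\]
together with $|\bs u_{n\delta}|\le1$ a.e.\ in $Q_T$.

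The core step is the strong convergence of $\bs u_{n\delta}$, read directly off the variational inequality. By Theorem \ref{T2}, for a.e.\ $t$ both $\bs u_{n\delta}(t)$ and $\bs u_\delta(t)$ solve \eqref{vi} with the respective data and belong to $\K$. Taking $\bs u_\delta(t)$ as test function in the inequality for $\bs u_{n\delta}$ and $\bs u_{n\delta}(t)$ in the inequality for $\bs u_\delta$, adding, and writing $\bs z_n=\bs u_{n\delta}-\bs u_\delta$, the gradient and zero-order contributions combine to $-\|\nabla\bs z_n(t)\|^2_{\bs L^2(\Omega)}-\delta\|\bs z_n(t)\|^2_{\bs L^2(\Omega)}$ while the data yield $\int_\Omega(\bs f_n-\bs f)\cdot\bs z_n$, so that
\[\tfrac12\tfrac{d}{dt}\textstyle\int_\Omega|\bs z_n|^2+\int_\Omega|\nabla\bs z_n|^2+\delta\int_\Omega|\bs z_n|^2\le\int_\Omega(\bs f_n-\bs f)\cdot\bs z_n.\]
Integrating over $(0,t)$ and using $\bs z_n(0)=\bs u_{n0}-\bs u_0$ gives
\[\tfrac12\textstyle\int_\Omega|\bs z_n(t)|^2+\int_{Q_t}|\nabla\bs z_n|^2+\delta\int_{Q_t}|\bs z_n|^2\le\tfrac12\|\bs u_{n0}-\bs u_0\|^2_{\bs L^2(\Omega)}+\|\bs f_n-\bs f\|_{\bs L^2(Q_T)}\|\bs z_n\|_{\bs L^2(Q_T)},\]
whose right-hand side tends to $0$ by \eqref{fnf} (note $\|\bs z_n\|_{\bs L^2(Q_T)}\le2|Q_T|^{1/2}$ since $|\bs z_n|\le2$). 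Hence $\bs u_{n\delta}\to\bs u_\delta$ in $L^\infty(0,T;\bs L^2(\Omega))\cap L^2(0,T;\bs H^1_0(\Omega))$ for the whole sequence, and interpolating the $\bs L^2$ convergence against $|\bs z_n|\le2$ gives convergence in $\bs L^p(Q_T)$ for every $1\le p<\infty$, exactly as in \eqref{convp}.

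It remains to identify the limit of $\lambda_{n\delta}$. From $\|\lambda_{n\delta}\|_{L^p(Q_T)}\le\overline C_p$ we extract a subsequence with $\lambda_{n\delta}\lraup\mu$ in $L^p(Q_T)$-weak for every $1\le p<\infty$. The linear terms in \eqref{eqlm} pass to the limit by Step 2 (the uniform bound on $\partial_t\bs u_{n\delta}$ identifies its weak limit as $\partial_t\bs u_\delta$). The decisive point is the nonlinear product $\lambda_{n\delta}\bs u_{n\delta}$: a weak--strong argument, combining the weak convergence of $\lambda_{n\delta}$ with the strong $\bs L^2(Q_T)$ convergence of $\bs u_{n\delta}$ and the bound $|\bs u_{n\delta}|\le1$, yields $\lambda_{n\delta}\bs u_{n\delta}\lraup\mu\bs u_\delta$, so that \eqref{eqlm} passes to $\partial_t\bs u_\delta-\Delta\bs u_\delta+\mu\bs u_\delta=\bs f$. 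Moreover $\lambda_{n\delta}\ge\delta$ forces $\mu\ge\delta$, and the identity $(\lambda_{n\delta}-\delta)(|\bs u_{n\delta}|-1)=0$ passes to the weak--strong limit to give $(\mu-\delta)(|\bs u_\delta|-1)=0$. Thus $(\mu,\bs u_\delta)$ solves \eqref{lm}, and the uniqueness established in Theorem \ref{T2} forces $\mu=\lambda_\delta$; since every subsequence admits a further subsequence with this same limit, the whole sequence satisfies $\lambda_{n\delta}\lraup\lambda_\delta$ in $L^p(Q_T)$-weak. I expect the main obstacle to be precisely this passage to the limit in $\lambda_{n\delta}\bs u_{n\delta}$, where only weak convergence of $\lambda_{n\delta}$ is available; this is why the strong $\bs L^2(Q_T)$ convergence of $\bs u_{n\delta}$ from Step 2 and the pointwise bound $|\bs u_{n\delta}|\le1$ are essential, and the same weak--strong mechanism handles the complementarity condition.
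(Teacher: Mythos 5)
Your proposal is correct and follows essentially the same route as the paper: the same duelling--test-function energy estimate for the strong convergence of $\bs u_{n\delta}$, the same uniform $L^p$ bounds on $\lambda_{n\delta}$ inherited from the $\eps$-approximation estimates, and the same weak--strong passage to the limit in the equation and in the complementarity condition. Your final step is in fact slightly more explicit than the paper's, which ends with ``the conclusion follows'': you spell out that uniqueness of the pair $(\lambda_\delta,\bs u_\delta)$ from Theorem \ref{T2} identifies the subsequential weak limit, and the subsequence principle then upgrades this to convergence of the whole sequence.
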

		\begin{proof}
			We first prove the convergence of $(\bs u_{n\delta})_n$ to $\bs u_\delta$. To do so, we use $\bs u_{n\delta}(t)$ as test function in the variational inequality \eqref{vi} solved by $\bs u_\delta$ and use $\bs u_\delta(t)$ in the variational inequality solved by $\bs u_{n\delta}$. Subtracting one inequality from the other and integrating over $(0,t)$, we get, setting $\bs z_{n\delta}=\bs u_{n\delta}-\bs u_\delta$,
			\begin{align*}
				\int_{Q_t}\partial_t\bs z_{n\delta}\cdot\bs z_{n\delta}+\int_{Q_t}|\nabla \bs z_{n\delta}|^2+\delta\int_{Q_t}|\bs z_{n\delta}|^2&\le\int_{Q_t}(\bs f_{n}-\bs f)\cdot\bs z_{n\delta}\\
				&\le \tfrac{C^2}2\int_{Q_t}|\bs f_n-\bs f|^2+\tfrac12\int_{Q_t}|\nabla \bs z_{n\delta}|^2.
			\end{align*}
			But then
			$$\tfrac12\int_\Omega|\bs z_{n\delta}(t)|^2+\tfrac12\int_{Q_t}|\nabla \bs z_{n\delta}|^2+\delta\int_{Q_t}|\bs z_{n\delta}|^2\le \tfrac{C^2}2\int_{Q_t}|\bs f_n-\bs f|^2+\tfrac12\int_\Omega|\bs u_{n0}-\bs u_0|^2,$$
			concluding the convergence of $\bs u_{n\delta}$ to $\bs u_\delta$ in $L^\infty\big(0,T;\bs L^2(\Omega)\big)\cap L^2\big(0,T;\bs H^1_0(\Omega)\big)$. Observe that, as $|\bs u_{n\delta}|\le 1$ and $|\bs u_\delta|\le 1$, we have, for $p\geq 2$,
			\begin{equation}\label{undp}
				\int_{Q_T}|\bs u_{n\delta}-\bs u_\delta|^p\le 2^{p-2}\int_{Q_T}|\bs u_{n\delta}-\bs u_\delta|^2\underset n\longrightarrow0.
				\end{equation}
			
			Considering now the sequence of Lagrange multipliers $(\lambda_{n\delta})_n$, we know that $(\lambda_{n\delta},\bs u_{n\delta})$ solves problem \eqref{lm}. In particular, we have
			\begin{equation}\label{vilim}
				\int_{Q_T}\partial_t\bs u_{n\delta}\cdot\bs\varphi+\int_{Q_T}\nabla \bs u_{n\delta}\cdot\nabla\bs\varphi+\int_{Q_T}\lambda_{n\delta}\bs u_{n\delta}\cdot\bs\varphi=\int_{Q_T}\bs f_n\cdot\bs\varphi,\qquad\bs u_{n\delta}(0)=\bs u_{n0}.
				\end{equation}
			
			Recall that $\widehat k_{n\eps\delta} = k_{\eps\delta}(| \bs u_{n\eps\delta}|^2-1)$, where $u_{n\eps\delta}$ is the unique solution of the approximating problem \eqref{ap} with data $(\bs f_n, \bs u_{n0})$, and $\lambda_{n\delta}$ is the weak limit in $L^p(Q_T)$ of $(\widehat k_{n\eps\delta})_\eps$, when $\eps\rightarrow0$. Thus, using \eqref{kepsp},
			$$\|\lambda_{n\delta}\|_{L^p(Q_T)}\le \varliminf_{\eps\rightarrow0}\|\widehat k_{n\eps\delta}\|_{L^p(Q_T)}\le \Big(\|\bs f_n\|_{\bs L^\infty(Q_T)}^p(C_{np})^p+\tfrac{\delta_0^{p-1}p}2\|\bs u_{n0}\|^2_{\bs L^2(\Omega)}+\delta_0^p p|Q_T|\Big)^\frac1p,$$
			where the constant $C_{np}$  depends only on $p, \|\bs f_n\|_{\bs L^\infty(Q_T)}$ and  $\|\bs u_{n0}\|_{\bs L^2(\Omega)}.$ So, by the assumption \eqref{fnf}, there exists $C>0$ independent of $n$ such that
			$$\|\lambda_{n\delta}\|_{L^p(Q_T)}\le C.$$
			
			By \eqref{estes} and \eqref{dt}, we have
			$$\|\nabla\bs u_{n\eps\delta}\|_{\bs L^2(Q_T)}\le C_1,\qquad\|\partial_t\bs u_{n\eps\delta}\|_{\bs L^2(Q_T)}\le C_2,$$
			where $C_1=C_1(\|\bs f_n\|_{\bs L^\infty(Q_T)},\|\bs u_{n0}\|_{\bs L^2(\Omega)})$ and $C_2=C_2(\|\bs f_n\|_{\bs L^\infty(Q_T)},\|\nabla\bs u_{n0}\|_{\bs L^p(\Omega)})$. So the assumption \eqref{fnf} implies the uniform boundedness (independent of $n$) of $\|\nabla\bs u_{n\eps\delta}\|_{\bs L^2(Q_T)}$ and $\|\partial_t\bs u_{n\eps\delta}\|_{\bs L^2(Q_T)}$. We have that $\nabla \bs u_{n\delta}$ is the weak limit of $\nabla \bs u_{n\eps\delta}$ and $\partial_t \bs u_{n\delta}$ is the weak limit of $\partial_t \bs u_{n\eps\delta}$ in $\bs L^2(Q_T)$. Then $\|\nabla\bs u_{n\delta}\|_{\bs L^2(Q_T)} \leq C_1$ and $\|\partial_t\bs u_{n\delta}\|_{\bs L^2(Q_T)} \leq C_2$.
			
			Using the previous uniform estimates and \eqref{undp}, there exists $\bs u_\delta\in L^2\big(0,T;\bs H^1_0(\Omega)\big) \cap H^1\big(0,T;\bs L^2(\Omega)\big) \cap \bs L^p(Q_T)$ and $\lambda_\delta\in L^p(Q_T)$ such that, for a subsequence,
			\begin{eqnarray*}
&\nabla\bs u_{n\delta}\underset n\lraup\nabla\bs u_\delta\quad\text{ in }\bs L^2(Q_T)\text{-weak},\qquad\qquad &\bs u_{n\delta}\underset n\longrightarrow\bs u_\delta\quad\text{ in }\bs L^p(Q_T),\\
&\partial_t\bs u_{n\delta}\underset n\lraup\partial_t\bs u_\delta\quad\text{ in }\bs L^2(Q_T)\text{-weak},\qquad\qquad &\lambda_{n\delta}\underset n\lraup\lambda_\delta\quad\text{ in }L^p(Q_T)\text{-weak}.
			\end{eqnarray*}
			
			Letting $n\rightarrow\infty$ in \eqref{vilim}, we conclude that
						\begin{equation*}
				\int_{Q_T}\partial_t\bs u_{\delta}\cdot\bs\varphi+\int_{Q_T}\nabla \bs u_{\delta}\cdot\nabla\bs\varphi+\int_{Q_T}\lambda_{\delta}\bs u_{\delta}\cdot\bs\varphi=\int_{Q_T}\bs f_{\delta}\cdot\bs\varphi,\qquad\bs u_{\delta}(0)=\bs u_0.
			\end{equation*}
			Moreover, as $u_{n\delta}=0$ on $\partial\Omega \times (0,T)$, we conclude that $u_{\delta}=0$ on $\partial\Omega \times (0,T)$ and, as $(\lambda_{n\delta}-\delta)(|\bs u_{n\delta}|-1)=0$, the convergences above are enough to conclude that $(\lambda_{\delta}-\delta)(|\bs u_{\delta}|-1)=0$. So the conclusion follows.
\end{proof}

\section*{Acknowledgements}
	
\noindent The authors  were partially financed by Portuguese Funds
through FCT (Funda\c c\~ao para a Ci\^encia e a Tecnologia) within the Projects UIDB/00013/2020 and UIDP/00013/2020.

\end{document}